\documentclass[11pt,a4paper,twoside]{amsart}

\usepackage{palatino}
\usepackage[all]{xy,xypic}
\usepackage{amsfonts,amsmath,oldgerm,amssymb,amscd,tikz-cd,mathrsfs,enumerate,amsthm,bm,longtable}
\usepackage{geometry}
\usepackage{hyperref}
\usepackage[english]{babel}
\usepackage{paralist}

\usepackage{fancyhdr}
\newcommand{\remove}[1]{}

\newcommand{\inj}{\hookrightarrow}
\newcommand{\ol}{\overline}

\newtheorem{theorem}{Theorem}[section]
\newtheorem{proposition}[theorem]{Proposition}
\newtheorem{lemma}[theorem]{Lemma}
\newtheorem{definition}[theorem]{Definition}
\newtheorem{corollary}[theorem]{Corollary}
\newtheorem{conjecture}[theorem]{Conjecture}
\newtheorem{example}[theorem]{Example}

\newcommand{\mm}{\mbox{$\mathfrak m$}}	
	
\newcommand{\mq}{\mbox{$\mathfrak q$}}

\newcommand{\Spec}{\text{Spec}}

\newcommand{\A}{\text{Alt}}

\newcommand{\bp}{\begin{proposition}}
	\newcommand{\ep}{\end{proposition}}
\newcommand{\bl}{\begin{lemma}}
	\newcommand{\el}{\end{lemma}}
\newcommand{\bt}{\begin{theorem}}
	\newcommand{\et}{\end{theorem}}
\newcommand{\bc}{\begin{corollary}}
	\newcommand{\ec}{\end{corollary}}
\newcommand{\bd}{\begin{definition}}
	\newcommand{\ed}{\end{definition}}
\newcommand{\bco}{\begin{conjecture}}
	\newcommand{\eco}{\end{conjecture}}
\newcommand{\bma}{\begin{bmatrix}}
	\newcommand{\ema}{\end{bmatrix}}

\newcommand{\Um}{\mbox{\rm Um}}		\newcommand{\SL}{\mbox{\rm SL}}
 \newcommand{\GL}{\mbox{\rm {GL}}}	

\newcommand{\Sp}{\mbox{\rm Sp}}
\newcommand{\ESp}{\mbox{\rm ESp}}
\newcommand{\M}{\mbox{\rm M}}
\newcommand{\I}{\mbox{\rm I}}
\newcommand{\E}{\mbox{\rm E}}

\newcommand{\StSp}{\mbox{\rm StSp}}

\newcommand{\ld}{\langle} \newcommand{\rd}{\rangle}

\newcommand{\SK}{\mathrm{SK_1}}
\newcommand{\KSp}{\mathrm{K_1Sp}}

\def\rmk{\refstepcounter{theorem}\paragraph{{\bf Remark} \thetheorem}}
\def\proof{\paragraph{Proof}}

\def\notation{\paragraph{\bf Notation}}

\newcommand{\remark}{\rmk}

\numberwithin{equation}{subsection}

\begin{document}	

\title{ Improved injective stability for relative $\mathrm{K_1Sp}$-groups}

\author{Sourjya Banerjee}
\address{Sourjya Banerjee, Department of Mathematics, Indian Institute of Technology Hyderabad,
 Kandi, Sangareddy, Telangana 502284, India}
\email{sourjya@math.iith.ac.in, sourjya91@gmail.com}

\author{Kuntal Chakraborty}
\address{Kuntal Chakraborty, Stat-Math Unit, Indian Statistical Institute, 203, B.T. Road, Kolkata - 700108}
\email{kuntal.math@gmail.com}

\keywords{$\mathrm{K_1}$-group; $\mathrm{K_1Sp}$-group; symplectic Steinberg group; symplectic Witt group, stably free modules}
\subjclass[2020]{19B14, 19C20, 19G12, 13C10, 19A13}	

\begin{abstract} 
We prove a relative version of Vorst’s theorem concerning the equality of the group of all invertible matrices and the group of all elementary matrices over $R[X]$ with respect to an ideal $I\subset R$ such that $R/I$ is regular, where $R$ is a regular $k$-spot. We then introduce a relative version of the symplectic elementary Witt group and show that it fits into a relative version of the Karoubi periodicity sequence. Combining these results, we improve the existing injective stability bounds for relative linear and symplectic $\rm K_1$-groups of smooth affine algebras over various base fields. As an application, we give a necessary and sufficient condition for the freeness of stably free modules over smooth real $4$-folds with empty real locus.

\end{abstract}

\maketitle

\begin{center}
    Dedicated to Professor Ravi A. Rao on his seventieth birthday.
\end{center}
\noindent\rule{\textwidth}{0.5pt}


	\section{Introduction}

The study of \emph{injective stability for $\mathrm{K}_1(R)$} originates in the seminal work of Bass--Milnor--Serre~\cite{bms}, where the stabilization of the sequence of unstable pointed sets
\begin{equation}\label{eq:stabilization}
	\dots \xrightarrow{\Gamma_{n-1}}
	\GL_n(R)/\E_n(R)
	\xrightarrow{\Gamma_n}
	\GL_{n+1}(R)/\E_{n+1}(R)
	\xrightarrow{\Gamma_{n+1}} \dots
\end{equation}

was investigated. They showed that \ref{eq:stabilization} stabilizes to the Whitehead group $\mathrm{K}_1(R)$. The injective stability problem seeks to determine the least integer $n$ such that $\Gamma_{n+i}$ is injective for all $i \ge 0$. In~\cite{bms}, a bound of $\max\{3,d+3\}$ was established, where $d=\dim(R)$, and it was conjectured that this could be improved to $\max\{3,d+2\}$. This conjecture was later confirmed by Vaser\v{s}te\u{\i}n~\cite{VasersteinBMS}, who also extended the result to the relative setting and to symplectic $\mathrm{K}_1$-groups. Vaser\v{s}te\u{\i}n's bounds are known to be optimal in general.

In~\cite{Suslin1984}, Suslin proved that stably free $R$-modules of rank $\dim(R)$ are free when $R$ is a regular affine domain over a perfect $C_1$ field (the condition on the base field is actually more general, see the condition in \cite[Theorem 2.4]{Suslin1984}). R.~Parimala subsequently observed that the regularity assumption on $R$ can be removed from Suslin's theorem. Building on this, Rao--van~der~Kallen~\cite{rvdk} established a $\mathrm{K}_1$-theoretic analogue using the idea of excision algebras, where Parimala's observation plays a crucial role. In particular, they proved that if $R$ is a nonsingular affine algebra of dimension $d$ over a perfect $C_1$ field, then the injective stability bound for $\mathrm{K}_1(R)$ improves to $d+1$. Following this philosophy, Basu, Chattopadhyay, and Rao~\cite{br,bcr} further investigated and refined injective stability bounds for $\KSp$-groups. Subsequently, Gupta in \cite{AG1} and the first author in \cite{Banerjee2025} studied the relative case with respect to principal ideals, for both linear and symplectic $\mathrm{K}_1$-groups.

Despite this progress, injective stability for relative linear and symplectic $\mathrm{K}_1$-groups with respect to an \emph{arbitrary} ideal has remained completely open --- a gap explicitly highlighted by Gupta in~\cite[Question~4.9]{AG1}.
In this article, we address this problem for relative symplectic (and linear) $\mathrm{K}_1$-groups with respect to a smooth ideal $I$, i.e., when the coordinate ring $R/I$ of the variety $V(I)$ is regular.  The main theorem of the article is the following (see Section~\ref{laurent}).

\bt\label{mt}
Let $R$ be a smooth affine algebra over an infinite field $k$ of dimension $d$ such that $\rm{char}(k)\not=2$. Let $I\subset R$ be a smooth ideal and let $X=\Spec(R)$. Then the following hold.
	\begin{compactenum}
	\item If $d\ge 5$ is odd, then the injective stability bound for $\mathrm{K_1Sp}(R,I)$ is $d+1$.
	\item If $d\ge 6$ is even, and $k$ is a perfect $C_1$ field, then the injective stability bound for $\mathrm{K_1Sp}(R,I)$ is $d$.
    \item  If $d\ge 6$ is even, and $k=\mathbb R$ and the set of all real points $X(\mathbb R)=\emptyset$, then the injective stability bound for $\mathrm{K_1Sp}(R,I)$ is $d$.
	\item If $d\ge 7$ is odd, and $k$ is an algebraically closed field such that $\frac{1}{d!}\in k$, then the injective stability bound for $\mathrm{K_1Sp}(R,I)$ is $d-1$.
    \item If $d\geq 5$ is an odd integer, then the injective stability bound for $\mathrm{K_1Sp}(R[T,T^{-1}],I[T,T^{-1}])$ is $d+1$.
\end{compactenum}
\et

Prior to us, Theorem \ref{mt}(1) was established in the absolute case, i.e., when $I=R$, by Chattopadhyay--Rao in \cite{cr1}. (2) was proved in the absolute case under the additional assumption that $d \equiv 0 \pmod{4}$ by Basu--Chattopadhyay--Rao in \cite{bcr}. Under the same assumption (3) was proved by the first author in \cite{Banerjee2025}, and (4) was proved under the additional assumptions that $d \equiv 1 \pmod{4}$ and $I$ is a principal ideal by Gupta in \cite{AG1}. We briefly outline below the key ingredients of the proof of Theorem \ref{mt}.
 
To treat the linear and symplectic cases uniformly, we write $\mathrm{S}(n, R, I)$ 
to denote either $\SL_n(R, I)$ or $\Sp_{2m}(R, I)$ when $n = 2m$, and similarly $\mathrm{E}(n, R, I)$ for the corresponding elementary subgroups. Our initial approach to tackling arbitrary ideals builds on a simple observation that any matrix in $\mathrm{S}(n, R, I)$ can be lifted to a matrix 
in $\mathrm{S}(n, R \oplus I,0\oplus I)$ [cf. \ref{general lift}]. Since the canonical projection map $R \oplus I 
\twoheadrightarrow R$ splits, Suslin's retract lemma~(\ref{K_2-lemma}) provides 
a nice description of the relative group $\E(n, R \oplus I, 0 \oplus I)$ in terms of 
the absolute group $\E(n, R \oplus I)$. On the other hand, the map $
R \oplus I \to R$ defined by $(r, i) \mapsto r + i$ induces a homomorphism
$ \E(n, R \oplus I,\, 0 \oplus I) \to \E(n, R, I)$, thereby providing 
a bridge between the groups $\mathrm{E}(n,R,I)$ and $\mathrm{E}(n,R \oplus I,0\oplus I)$.

\subsection{A relative version of Vorst's theorem}A key observation in the study of injective stability for linear and symplectic $\mathrm{K}_1$-groups of regular affine algebras is that this phenomenon relies heavily on Vorst’s theorem~\cite[Theorem~3.3]{Vorst} (see also~\cite{br}), which essentially tells us that if $R$ is a regular local ring essentially of finite type over a field, then
\[
\mathrm{S}(n,R[X]) = \E(n, R[X]).
\]
 However, even when $R$ is regular, the excision algebra $R \oplus I$ need not
remain regular. In fact, this absence of a relative version of Vorst's theorem turns out to be the first genuine obstruction. In Section~\ref{Vorst}, we establish the following relative version of Vorst’s theorem with respect to a smooth ideal $I$. 
\bt\label{rvthorem}
Let $R$ be a regular $k$-spot over an infinite field $k$, i.e., $R=A_{\mathfrak{p}}$, where $A$ is an affine $k$-algebra and $\mathfrak{p}\subset A$ is a regular prime ideal. Let $I \subset R$ be a smooth ideal. Then
\[
\mathrm{S}(n,R[X],I[X]) = \E(n,R[X],I[X]),
\]where in the linear case $n\ge 3$, and in the symplectic case $n\ge 6$.
\et

Our approach in the relative setting differs substantially from that of Vorst. Instead, it is inspired by Lindel’s celebrated work~\cite{Lindel81} on the Bass--Quillen conjecture. More precisely, we use Nashier’s finer refinement~\cite{budh} of Lindel’s philosophy, that a regular $k$-spot may be viewed as an étale extension of a localization of a polynomial ring over a field. Then using a result due to Ojanguren~\cite{ojanguren} and Nagata's change of variables theorem, we further reduce the problem to the case of a polynomial extension in one variable over a local ring. We then apply a relative version of a {monic inversion principle} \ref{relative monic inversiion for symplectic matrix} for matrices to complete the proof. In these reduction steps, another key result we establish is the following theorem.
      \bt
  Let $(R,\mathfrak{m})$ be a local ring. Then for $n\geq 3$ we have the following. $$\Sp_{2n}(R[X],\mathfrak{m}[X])\cap \ESp_{2n}(R[X])=\ESp_{2n}(R[X],\mathfrak{m}[X])$$
  \et
 
The linear analogue of the above result is due to Suslin~\cite{Suslin77}. To 
establish the symplectic case, we set up and employ the relative symplectic 
Steinberg groups in Section~\ref{K2}.

\subsection{Symplectic completion of relative unimodular rows} 
Let $R$ be a smooth affine $k$-algebra of dimension $d\ge 3$, where $k$ is a field. In~\cite{rvdk}, Rao--van der Kallen observed that the triviality of the orbit space
$\frac{\Um_{d+1}(R \oplus \langle a \rangle)}{\SL_{d+1}(R \oplus \langle a \rangle)}$
for any $a \in R$,  is a sufficient condition for the 
injective stability bound of $\mathrm{K}_1(R)$ to be $d + 1$. In Proposition~\ref{cis}, we show that this philosophy also extends to the setting of relative symplectic groups. Consequently, the problem of completion of a relative unimodular rows with respect to a relative symplectic matrix arises naturally in our setting. However, in the relative setting, the necessary completion results were not previously available in the required generality.
In Theorem \ref{symplectic completion}, we prove the following result (for the definitions of $\A_{2n}(R,I)$ and $\Sp_{\varphi}(R,I)$ we refer to \ref{relative witt gr} and \ref{symplectic matrix w.r.t. a form}).

\bt\label{sc}
Let $R$ be a regular affine algebra of dimension $d$ over a field $k$ such that $\operatorname{char}(k)\not= 2$. Let $I$ be an ideal of $R$.  

\begin{compactenum}
    \item Let $d\ge 3$ be an odd integer and let $\varphi\in \A_{d+1}(R,I)$. Assume that one of the following holds. 
    \begin{compactenum}
        \item If $k$ is a perfect $C_1$ field. 
        \item If $k=\mathbb{R}$ and $X(\mathbb{R})=\emptyset$, where $X=\Spec(R)$.
    \end{compactenum}Then $\Um_{d+1}(R,I)= e_1 \Sp_{\varphi}(R,I)$.
    \item Let $d\ge 4$ be an even integer and let $\varphi\in \A_{d}(R,I)$. If $k=\ol k$ with $\frac{1}{(d-1)!}\in k$, then $\Um_d(R,I)=e_1\Sp_{\varphi}(R,I)$.
    
\end{compactenum}
\et 

Whenever $\varphi$ is the standard elementary alternating form $\chi_{n}$, the group $\Sp_{\varphi}(R,I)$ coincides with the usual relative symplectic group $\Sp_{2n}(R,I)$. Prior to us, Theorem~\ref{sc}(1)(a) was proved by Basu--Chattopadhyay--Rao~\cite{bcr} under the additional assumptions that $\varphi=\chi_{n}$ and $d \equiv 1 \pmod{4}$. Under the same assumptions, part~(1)(b) was established by the first author \cite{Banerjee2025}. Part~(1)(c) was proved by Gupta~\cite{AG1} under the additional assumptions that $\varphi=\chi_{n}$ and $d \equiv 2 \pmod{4}$. Recently, Tariq showed in a series of papers~\cite{Syed21,Syed2024,Syed25} how to remove the restriction on $d$ and settle the remaining cases in the absolute setting using modern cohomological methods. A key ingredient in their works is the identification of the symplectic elementary Witt group $W_E(R)$ with the Grothendieck-Witt group via an exact sequence known as the Karoubi periodicity sequence.

In Section~\ref{relative elementary symplectic Witt groups} we introduce the relative symplectic elementary Witt group $W_E(R,I)$ and show that it fits in a relative version of Karoubi periodicity sequence. 
\[
\KSp(R,I) \xrightarrow{} \SK(R,I) \xrightarrow{} W_E(R,I)
\]
\subsection{Injective stability for relative $\mathrm{K_1}$-groups}

In the proofs of \cite[Theorems 3.6 and 3.8]{Syed2024}, Tariq also observes that improved injective stability bound for the linear $\rm{K_1}$-group of $R$ plays a crucial role in understanding the symplectic orbit space of unimodular rows $\frac{\Um_{2n}(R)}{\Sp_{\varphi}(R)}$,
where $\varphi \in \A_{2n}(R,I)$. In Proposition~\ref{sufficient condition for alternating completion}, we show that this observation also extends to the relative setting. However, for relative $\mathrm{K_1}$-groups, improved injective stability bounds were not available in the literature, except when the $I$ is a principal ideal. In Section~\ref{affine algebra}, we prove the following result.

		\bt\label{mt3}
	Let $R$ be a regular affine algebra over an infinite field $k$ of dimension $d\ge 3$. Let $I\subset R$ be an ideal such that $R/I$ is regular. Then we have the following.
	\begin{compactenum}
		\item If $k$ is a perfect $C_1$ field, then the injective stability bound for $\mathrm{K_1}(R,I)$ is $d+1$.
		\item If $k=\mathbb R$ and $X(\mathbb R)=\emptyset$, where $X=\Spec(R)$, then the injective stability bound for $\mathrm{K_1}(R,I)$ is $d+1$.
		\item If $k=\ol k$ and $d\ge 4$ such that $\frac{1}{d!}\in k$, the injective stability bound for $\mathrm{K_1}(R,I)$ is $d$.
	\end{compactenum}
	
	\et
\subsection{Application: stably free modules over real $4$-folds}

In Section \ref{application}, we give an application of the theory developed in this article. In particular, we establish the following necessary and sufficient condition for the freeness of stably free modules over smooth real $4$-folds with empty real locus.

\begin{theorem}
	Let $X=\Spec(A)$ be a smooth real variety of dimension $4$ such that $X(\mathbb{R})=\emptyset$. Then every stably free $A$-module is free if and only if
$	\widetilde{V_{SL}}(A)=0$, where $\widetilde{V_{SL}}(A)$ is a quotient of the generalized Vaserstein symbol $\widetilde{V}(A)$.
\end{theorem}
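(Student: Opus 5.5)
The strategy is to prove both implications by linking freeness of stably free modules to the orbit space of unimodular rows of length $5 = d+1$, and then linking that orbit space to the generalized Vaserstein symbol. Here $d=\dim A=4$.

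\textbf{Step 1: reduce to rows of length $5$.} By Suslin's cancellation over regular affine algebras whose base field satisfies the relevant cohomological condition, stably free modules of rank $\ge d$ are free. This is the analogue of Theorem \ref{sc}(1)(b) in the real case with $X(\mathbb R)=\emptyset$. Hence every stably free $A$-module is free if and only if every unimodular row of length $5=d+1$ is completable, i.e.
\[
\Um_5(A)=e_1\SL_5(A).
\]
Using the improved injective stability of Theorem \ref{mt3}(2) with $I=A$, I expect $\Um_5(A)/\E_5(A)=\Um_5(A)/\SL_5(A)$. The point is that $\SL_5(A)/\E_5(A)\cong \mathrm{SK}_1(A)$ once the stability bound is $d+1=5$.

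\textbf{Step 2: use the generalized Vaserstein symbol.} For rows of length $d+1$ over a smooth $4$-fold, I would use the map
\[
\widetilde V:\Um_5(A)/\E_5(A)\to \widetilde V(A),
\]
which Tariq and Asok--Fasel show is a bijection for smooth affine $4$-folds over suitable fields. This should extend to $\mathbb R$ with empty real locus, since the obstruction $\mathbb A^1$-homotopy sheaves then behave as over a field of cohomological dimension $\le 1$. I would define $\widetilde{V_{SL}}(A)$ as the quotient of $\widetilde V(A)$ by the image of $\SL_5(A)$ acting, i.e. the subgroup accounting for the passage from $\E_5$-orbits to $\SL_5$-orbits. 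With that definition, $\Um_5(A)/\SL_5(A)\cong\widetilde{V_{SL}}(A)$, and the equivalence follows: all rows are completable exactly when $\widetilde{V_{SL}}(A)=0$.

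\textbf{Step 3: identify the $\SL_5$-action with the symplectic part.} This is the main obstacle. One must show that the $\SL_5$-action on $\widetilde V(A)$ factors through $\mathrm{SK}_1(A)$, and that its image is controlled by the relative Karoubi periodicity sequence
\[
\KSp(A)\to\SK(A)\to W_E(A).
\]
Concretely, the symbol of $e_1\sigma$ for $\sigma\in\SL_5(A)$ should be expressed through the image of $[\sigma]$ in $W_E(A)$, via the alternating forms $\sigma^t\chi\sigma$. Here I would use Proposition \ref{sufficient condition for alternating completion} together with the symplectic completion of Theorem \ref{sc}(1)(b), in the case $d=3$ after a Laurent or excision reduction. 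These show that $\Sp_\varphi$-orbits are trivial, so only the $W_E$-contribution survives in the quotient.

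I would first verify this factorization on elementary alternating forms, and then use the homotopy-invariance arguments afforded by Theorem \ref{rvthorem} to pass to the general case.
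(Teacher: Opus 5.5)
There is a genuine gap, and it starts in Step 1. A stably free module $P$ with $P\oplus A\cong A^{5}$ has rank $4$, so $\Um_5(A)=e_1\SL_5(A)$ says exactly that stably free modules of rank $\ge d$ are free. You assert that this holds. Over $\mathbb R$ it is Lemma \ref{completion over real varieties}, not Suslin's theorem, since $\mathrm{c.d.}_2(\mathbb R)=\infty$. Your ``if and only if'' would therefore make the theorem unconditional, and $\widetilde{V_{SL}}(A)$ would play no role.

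The correct reduction goes down in rank. By \cite[Theorem 4.13]{bfl}, stably free modules of rank $\ge 3$ are free and those of rank $2$ are $1$-stably free. So the real question is whether $\Um_3(A)/\SL_3(A)$ is trivial. This is also why Step 2 fails as written. The generalized Vaserstein symbol lives on $\Um_3(A)/\E_3(A)$ (more generally on unimodular elements of $A\oplus P_0$ with $P_0$ of rank $2$ and trivial determinant), not on $\Um_5(A)/\E_5(A)$. Moreover, $\widetilde{V_{SL}}(A)$ is by definition the cokernel of the hyperbolic map $\SK(A)\to\widetilde V(A)$; redefining it as a quotient by an $\SL_5$-action changes the statement you are proving.

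The missing bridge is Syed's criterion \cite[Theorems 3.2 and 3.6]{Syed21}. It gives: $\Um_3(A)/\SL_3(A)$ is trivial if and only if $\widetilde{V_{SL}}(A)=0$, once two hypotheses hold.
\begin{enumerate}
\item Completability of $\Um_5(A)$, supplied by \cite[Theorem 2.8]{Banerjee2025}. So $\Um_5(A)=e_1\SL_5(A)$ is an input, not the equivalent condition.
\item Triviality of $\Um_4(A)/\Sp_\gamma(A)$ for every non-degenerate alternating $\gamma\in\GL_4(A)$. This is Proposition \ref{SymplecticOrbitSpace} with $d=4$.
\end{enumerate}

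The second hypothesis is where the paper's machinery enters, but not as you propose. Theorem \ref{sc}(1)(b) concerns odd $d$ and rows of length $d+1$, so it says nothing about a $4$-fold. No Laurent or excision reduction is used, and no homotopy-invariance argument via Theorem \ref{rvthorem}. Instead, Proposition \ref{sufficient condition for alternating completion} is applied directly at length $4$, and its three hypotheses are checked as follows.
\begin{enumerate}
\item Hypothesis (1) needs injectivity of $\SL_4(A)/\E_4(A)\to\SK(A)$. This is one step better than the bound $d+1$ of Theorem \ref{mt3}(2) that you invoke. It is Lemma \ref{injective stability}, proved via Proposition \ref{cis} by completing rows over the singular ring $A[X,T]/\langle T^2-T(X^2-X)\rangle$ using \cite[Theorem 4.11]{bfl}.
\item Hypothesis (2) comes from elementarily moving $v$ to a row whose last entry is a power, as in the proof of \cite[Theorem 4.11]{bfl}, and then applying Lemma \ref{completion of coordinate power lies in im(f)}.
\item Hypothesis (3) follows from transitivity of $\E_{4+2i}(A)$ on $\Um_{4+2i}(A)$ for $i\ge 1$, together with Lemma \ref{relative version of Lemma 5.5 of SV}.
\end{enumerate}
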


\subsection{Layout of the article} The article is organized as follows. In Section~\ref{2}, we recall various
definitions and results concerning linear and symplectic groups, analytic
isomorphisms, and excision algebras. In Section~\ref{K2}, we define the
relative symplectic Steinberg group and prove a symplectic
$\mathrm{K}_2$-lemma. In Section~\ref{Vorst}, we prove a relative version of Vorst's theorem. Section~\ref{affine algebra} is devoted
to results concerning injective stability for relative $\mathrm{K}_1$-groups. In Section~\ref{relative elementary symplectic Witt groups}, we define the relative elementary symplectic Witt groups. Section~\ref{symplectic completion of unimodular rows} is devoted to the study of symplectic completions of relative unimodular rows. In Section~\ref{laurent}, we establish the injective stability bounds for relative symplectic groups of various affine algebras. Finally, in Section~\ref{application}, we provide a criterion for the freeness of stably free modules over smooth real $4$-folds with empty real locus.

\addtocontents{toc}{\protect\setcounter{tocdepth}{-1}}

\section*{Acknowledgment} The authors sincerely thank Tariq Syed for helping them to understand his results \cite[Theorems~3.6 and~3.8]{Syed2024}. Part of this work was carried out during the second author's visit to IMSc, and he is grateful to Rahul Gupta and IMSc for their hospitality. The second author acknowledges the Department of Science and Technology (DST), India, for the INDO-RUSS project (DST/INT/RUS/RSF/P-48/2021). 

\addtocontents{toc}{\protect\setcounter{tocdepth}{1}}

	\section{Preliminaries}\label{2}
	
	\subsection{Some assorted results}
	
	This section summarizes several results and definitions from the literature that are frequently used in this article to prove the main theorem. We may restate or improve these results as necessary. Before proceeding further, we recall several definitions related to the
	relative symplectic groups from \cite[Chapter 1]{SV}. We believe most of the following definitions are well-known. However, for the sake of completeness, we include them.
	
    \bd
		Let $R$ be a ring and $I$ be an ideal of $R$.
		
		\begin{compactenum}
			\item Let $\M_n(R,I)$ and $\GL_n(R,I)$ be the kernel of the canonical maps $\M_n(R)\longrightarrow \M_n(R/I)$ and $\GL_{n}(R) \longrightarrow \GL_{n}(R/I)$ respectively. The group $\SL_{n}(R,I)$ denotes the subgroup of $\GL_{n}(R,I)$ of elements of determinant $1$.
			
			\item The elementary group $\E_n(R)$ is the subgroup of $\GL_n(R)$ generated by the matrices of the form $e_{ij}(\lambda)$, where $\lambda\in R$ and $i\neq j$, where $e_{ij}(\lambda)$ with $i\neq j$ is the matrix whose all diagonal entries are $1$ and $ij^{th}$ entry is $\lambda$ and the rest entries are zero.
			
			\item  The relative elementary group is the subgroup of $\SL_{n}(R,I)$ generated by the matrices of the form $\alpha e_{i,j}(a) \alpha^{-1}$, where $\alpha \in \E_{n}(R),i\neq j$ and $a\in I$.
			
			\item If $\alpha\in \M_m(R)$ and $\beta\in \M_n(R)$, then by $\alpha\perp \beta$ we denote the matrix $\begin{pmatrix}
				\alpha &0\\0&\beta
			\end{pmatrix}\in \M_{m+n}(R)$.
			
			\item 	We identify $\GL_{n}(R)$ with a subgroup of $\GL_{n+1}(R)$ by associating a matrix $\alpha\in \GL_n(R)$ with  the matrix $1\perp \alpha \in \GL_{n+1}(R)$.
			We set the following:
			
            \begin{compactenum}[\quad\quad\quad\quad]
	\item 			$\GL(R):=\varinjlim \GL_{n}(R), \, \GL(R,I):=\varinjlim \GL_{n}(R,I)$;
		\item
				$\SL(R):=\varinjlim \SL_{n}(R), \, \SL(R,I):=\varinjlim \SL_{n}(R,I)$;
			\item	$\E(R):=\varinjlim \E_{n}(R), \, \E(R,I):=\varinjlim \E_{n}(R,I)$;
			\end{compactenum}

			\item The matrix $\chi_n\in \E_{2n}(R)$ is defined inductively by $\chi_1=\begin{pmatrix}
				0&1\\-1&0
			\end{pmatrix}$ and $\chi_{n+1}=\chi_n\perp \chi_1$ for $n\geq 1$. 
			
			\item 
			The matrix $\sigma_{n}\in \GL_{2n}(R)$ is defined inductively by the following formula:
            
            $\sigma_1=\begin{pmatrix}
				0&1\\1&0
			\end{pmatrix}$ and $\sigma_{n+1}=\sigma_n\perp \sigma_1$ for $n\geq 1$.

				\item A matrix $\alpha \in M_n(R)$ is said to be alternating if $\alpha=\beta-\beta^T$ for some $\beta\in M_n(R)$, i.e., $\alpha$ is skew-symmetric and all the diagonal elements are $0$.
			
			\item The symplectic group $\Sp_{2n}(R)$ is defined by $\{\alpha\in \GL_{2n}(R): \alpha^T\chi_n \alpha =\chi_n\}$.
			
			\item Let $\sigma:\{1,\dots, 2n\}\to \{1,\dots,2n\}$ be the permutation defined by $\sigma(2i)=2i-1$ and $\sigma(2i-1)=2i$ for all $i=1,\dots,n$. Then the elementary symplectic group $\ESp_{2n}(R)$ is a subgroup of $\Sp_{2n}(R)$ generated by the matrices 
			
			\[ se_{ij}(\lambda)=  \left\{
			\begin{array}{ll}
				{\rm I}_{2n}+\lambda E_{ij}, & \text{ if } i=\sigma(j) \\
				{\rm I}_{2n}+ \lambda E_{ij}-(-1)^{i+j} \lambda E_{\sigma(j)\sigma(i)}, & \text{ if } i\neq \sigma(j)
							\end{array} 
			\right. \]where $E_{ij}$ is the matrix whose $ij$-th entry is $1$ and all the other entries are $0$ and $\lambda\in R$. We denote $\ESp_{2n}(I)$ by the subgroup of $\Sp_{2n}(R)$ generated by $se_{ij}(\lambda)$, where $1\leq i\ne j\le 2n$ and $\lambda\in I$.
			
			\item The relative symplectic group $\Sp_{2n}(R,I)$ is denoted by the set $\{\alpha\in \GL_{2n}(R,I): \alpha^T \chi_n \alpha=\chi_n\}$.  The relative elementary symplectic group $\ESp_{2n}(R,I)$ is the subgroup of $\Sp_{2n}(R,I)$ generated by the matrices of the form $\alpha se_{ij}(x) \alpha^{-1}$, where  $\alpha\in \ESp_{2n}(R)$, $i\neq j$ and $x\in I$.                                                                              
			\item We identify $\Sp_{2n}(R)$ with a subgroup  of $\Sp_{2n+2}(R)$ by associating the symplectic matrix $
			\rm I_2 \perp \alpha\in \Sp_{2n+2}(R)$, with the element $\alpha\in \Sp_{2n}(R)$. We set the following:
				\begin{compactenum}
				
                \item $\Sp(R,I):=\varinjlim \Sp_{2n}(R,I)$; 
				\item $\ESp(I):= \varinjlim \ESp_{2n}(I)$;  
                \item $\ESp(R,I):=\varinjlim \ESp_{2n}(R,I)$;
                \item $\Sp(R):=\Sp(R,R)$;
                \item $\ESp(R):=\ESp(R,R)$.
				\end{compactenum}
			\item A row $(a_1,a_2,\dots,a_n)\in R^n$ is said to be unimodular if there exist $(b_1,b_2,\dots,b_n)\in R^n$ such that $\sum_{k=1}^{n}a_kb_k=1$. A row $(a_1,a_2,\dots,a_n)\in R^n$ is said to be relative unimodular row with respect to the ideal $I$ if it is unimodular and $$(a_1,a_2,\dots,a_n)\equiv (1,0,\dots,0) \pmod I,$$ i.e., $a_1-1,a_2,\dots,a_n$  all belong to $I$. The set of all unimodular rows is denoted by $\Um_n(R)$ and the set of all relative unimodular rows with respect to the ideal $I$ is denoted by $\Um_n(R,I)$.
			
			\item A matrix $\alpha \in \M_n(R)$ is said to be alternating if it can be written as $\gamma - \gamma^T$ for some $\gamma \in \M_n(R)$. In other words, $\alpha$ is skew-symmetric and has all its diagonal entries equal to zero.

		\end{compactenum}

	\ed

    \notation\label{uniform def.} Let $R$ be a ring and let $I \subset R$ be an ideal. We denote $\SL_n(R,I)$ or $\Sp_{2m}(R,I)$ when $n=2m$, by $\mathrm{S}(n,R,I)$. Similarly, we denote $\E_n(R,I)$ or $\ESp_{2m}(R,I)$ when $n=2m$, by $\E(n,R,I)$. When $I=R$, we denote $\mathrm{S}(n, R,I)$ and $\E(n,R,R)$ respectively by  $\mathrm{S}(n, R)$ and $\E(n,R)$. Let $\lambda\in R$. We denote $\eta_{ij}(\lambda)$ as an elementary generator of $\E(n,R)$, i.e., when $\E(n,R)= \E_n(R)$, then $\eta_{ij}(\lambda)=e_{ij}(\lambda)$ for $1\leq i\neq j\leq n$ and when $\E(n,R)= \ESp_{2m}(R)$, then $\eta_{ij}(\lambda)= se_{ij}(\lambda)$ for $1\leq i\neq j\leq 2m$.
	
	\bd
	Let $G$ be a group and $H,K$ be two subgroups of $G$. By $[H,K]$ we denote the subgroup of $G$ generated by the commutators $[h,k]:=hkh^{-1}k^{-1}$, where $h\in H$ and $k\in K$.
	\ed

We recall the following result from \cite[Chapter V]{Bass1968} and \cite{Vaserstein1970}

\bl
Let $R$ be a commutative ring and $I$ be an ideal of $R$. Then we have the following.
\begin{compactenum}[\quad\quad\quad\quad(1)]
	\item $\E(R)=[\E(R), \E(R)]=[\GL(R), \GL(R)]$;
	\item $\E(R,I)= [\E(R), \E(R,I)]=[\GL(R), \GL(R,I)]$;
	\item $\ESp(R,I)= [\ESp(R),\ESp(I)]=[\Sp(R),\Sp(R,I)]$.
\end{compactenum}
\el

\bd
Let $R$ be a commutative ring and $I$ be an ideal of $R$. The linear and symplectic $\mathrm{K_1}$-groups are defined as follows.
\begin{compactenum}[\quad\quad(1)]
	\item $\mathrm{K_1}(R,I):=\GL(R,I)/\E(R,I)$,
    \item $ \SK(R,I):=\SL(R,I)/\E(R,I)$;
	\item $\KSp(R,I):=\Sp(R,I)/\ESp(R,I)$;
	\item $\mathrm{K_1}(R):=\mathrm{K_1}(R,R)$; 
    \item $\SK(R):=\SK(R,R)$;
    \item $ \KSp(R):=\KSp (R,R)$.
\end{compactenum}

\ed
	
	%
	%
		%
	%

\bd
		A ring homomorphism $\phi: B\to D$ is said to be a retract if there exists $\gamma: D\hookrightarrow B$ such that $\phi\circ \gamma$ is identity on $D$. We also say $D$ is a retract of $B$.
	\ed

	\remark\label{rectract}
	
		Let $A$ be a ring and $I\subset A$ be an ideal. Then the canonical projection map $\pi:A\oplus I\twoheadrightarrow A$ is a retract.

	The next result is a special case of Suslin's retract lemma. The proof of the following version can be found in \cite[Lemma 3.3]{acr}.
	
	\bl
	
		\label{K_2-lemma}
		Let $\pi: B\twoheadrightarrow D$ be a retract and $J$ be the kernel of $\pi$. Then for $n\geq 2$,  $$\E(n,B,J)=\mathrm{S}(n,B,J)\cap \E(n,B).$$
	\el

%
%
%

	%
	
	%
\bd A ring $R$ is said to be an essentially of finite type over over a field $k$, if $R$ is of the form $S^{-1}A$, where $A$ is an affine $k$-algebra and $S\subset A$ is a multiplicatively closed set.
\ed
\bd
	
		Let $k$ be a field. Let $C$ be an affine $k$-algebra. $\mathfrak{p}$ be a prime ideal of $C$. The localization ring $C_{\mathfrak{p}}$ is said to be a regular $k$-spot if $C_{\mathfrak{p}}$ is a regular local ring.
\ed
	
The next result is due to Vorst \cite[Theorem 3.3]{Vorst}. In the symplectic case it was proved by Basu–Rao \cite[Theorem 3.8]{br}, where they established it under the additional assumption that the base field is perfect. This assumption can, however, be removed using an argument of Mohan Kumar given in \cite[Theorem 3.3, last paragraph]{Vorst}. For completeness, we include Mohan Kumar’s argument in the proof.
	
	\bl
		\label{Symplectic K_1}
		Let $k$ be a field, and let $R$ be a regular $k$-spot. Then we have $$\mathrm{S}({n},R[X])=\E({n},R[X]),$$where $n\geq 3$ in the linear case and $n\geq 6$ in the symplectic case.
	\el
	
	 \proof 
    
    Clearing denominators, one can find finitely many elements $\alpha_1,\dots, \alpha_s\in k$ such that (1) a set of generators of $I$ is in $A:= \mathbb{F}_p[\alpha_1,\dots,\alpha_s][X_1,\dots,X_t]$, (2) a minimal set of generators of $\mathfrak{p}$ is in $A$, and (3) $\alpha\in (A/J)_{\mq}[X]$, where $J\subset A$ is the ideal generated by the set of generators of $I$, and $\mq\subset A$ is the ideal generated by the set of generators of $\mathfrak{p}$. If we take $B:=(A/J)_{\mq}$, then $B \subset R$ is a regular $\mathbb{F}_p$-spot such that $\alpha \in \mathrm{S}({n},B[X])$ (cf. \cite[Theorem 3.13, last paragraph]{Chakraborty2023}). Since $\mathbb{F}_p$ is a perfect field one can apply \cite[Theorem 3.3]{Vorst} or \cite[Theorem 3.8]{br} to obtain that $\alpha \in \E({n},B[X])\subset \E(n,R[X])$. This concludes the proof. \qed
	
One can find the following result in \cite[Proposition 2.2.4]{BH}.
	
	\bl
		\label{quotient of regular is regular}
		Let $R$ be a regular local ring  and $I$ be an ideal of $R$. Then $R/I$ is regular if and only if $I$ is generated by a subset of a regular system of parameters.
	\el

	\bl
		\label{Nagata}
		$($Nagata$)$
		Let $k$ be a field. Let $f$ be a polynomial in $k[X_1,X_2,\dots,X_d]$ and $\phi(X_1)\in k[X_1]$ be a monic polynomial. Then there exists a change of variables $X_1\mapsto X_1$ and $X_i\mapsto X_i+\phi(X_1)^{r_i}$, $2\leq i\leq d$, such that $f=c\cdot h(X_1,X_2,\dots,X_d)$, where $c\in k^*$ and $h\in k[X_2,X_3,\dots,X_d][X_1]$ is a monic polynomial in $X_1$ .
	\el
	We conclude this subsection with a result due to Ojanguren \cite{ojanguren}.
	
	\bl
		\label{ojanguren}
		Let $k$ be an infinite field, and let $\varphi(X_1)\in k[X_1]$ be an irreducible monic polynomial. Let $F, G \in k[X_1,\dots, X_d]$ be
		two polynomials having no common factors. Let $\mathfrak{m}=\langle\varphi(X_1),X_2, \dots, X_d\rangle$. Assume that $F\notin \mathfrak{m}$.
		Then there exists a change of variables of the type $X_1\mapsto X_1$ and $ X_i\mapsto X_i+ \alpha_i \varphi(X_1)$, $i\ge 2$; for some suitably chosen $\alpha_i\in k$, so that
		\begin{compactenum}
		\item $F, G$ are monic in $X_1$ and		
		\item  $F(X_1,0,\dots, 0)$ and $G(X_1,0,\dots, 0)$ are coprime.
			\end{compactenum}
	\el

\subsection{Analytic isomorphism}
This subsection recalls some basic facts about analytic isomorphisms, which play a crucial role throughout the article.
	
	\bd
		Let $A, B$ be two rings and let $\phi: B\rightarrow A$ be a ring homomorphism. For an element $h\in B$, the homomorphism $\phi$ is said to be analytic  isomorphism along $h$, if the following conditions are satisfied.
			\begin{compactenum}
		\item  the element $h$ is a non-zero divisor in $B$.
		\item the element $\phi(h)$ is a non-zero divisor in $A$.
		\item the map $\phi$ induces an isomorphism between $B/\langle h\rangle B$ and $A/\langle \phi(h)\rangle A$.
			\end{compactenum}
	\ed
	
	\begin{example}
		\label{example of a.i.}
	Let $R$ be a ring and let $s,t \in R$ be such that $Rs+Rt=R$. Assume that $s$ is a non-zero divisor in $R$. Then, as shown in \cite[Example~1]{Bhatwadekar1983}, the localization map $R \to R_t$ is an analytic isomorphism along $s$.
	\end{example}

	\bd
		Let $(R,\mathfrak{m})$ be a local ring. A monic polynomial $f\in R[X]$ is said to be a Weierstrass polynomial if $f= X^n+ a_1X^{n-1}+\dots+a_n$, where $ a_i \in \mathfrak{m}$  for $i=1,2,\dots,n$.
	\ed
	\smallskip
	
	\bd
		Let $(R,\mathfrak{m})$ be a local ring and $\varphi$ be a  monic polynomial in $R[X]$. A polynomial $h\in R[X]$ is said to be a Weierstrass polynomial relative to $\varphi$ if $h=\varphi^n+ a_1\varphi^{n-1}+\dots +a_n$, where $a_i\in\mathfrak{m}$ for all $i=1,2,\dots n$.
	\ed
	
	We note the following results from \cite[Proposition 1.7 and Theorem 2.8]{budh}
	
	\bl
		\label{0.6}
		Let $(R,\mathfrak{m})$ be a local ring and $f\in R[X]$ be a   Weierstrass polynomial. Then $R[X] \hookrightarrow R[X]_{\langle \mathfrak{m},X\rangle}$ is an analytic isomorphism along $f$.
	\el
	
	\bt
		\label{Nashier}
		Let $(R,\mathfrak{m})$ be a regular local algebra of dimension $d$ with a separating ground field $K$. Let $g$ be any element of $\mathfrak{m}^2$.
		Then there exists a regular local subring $S$ of $R$ such that:
		\begin{compactenum}[\quad\quad (i)]
			\item $S = K[X_1,\dots, X_d]_{\langle \varphi(X_1),X_2,\dots,X_d)\rangle}$, where $\varphi(X_1) \in K[X_1]$ is an irreducible monic polynomial.
			\item $S\subset R$ is analytically isomorphic along $h$, for some $h \in gR\cap S$ (here $h$ depends on the choice of $g$).
		
		\end{compactenum}

	\et

	The following result is taken from \cite[Theorem 1.2]{Rao2} which is a finer version of the above result.
	
	\bt\label{NRao}
		Let $(R,\mathfrak{m})$ be a regular $k$-spot, of dimension $d$ over a perfect field $k$. Let $g$ be any element of $\mathfrak{m}^2$. Let $\{g,f_1,f_2,\dots, f_{d-1}\}$ be a sequence in $R$ with $\{f_1,f_2,\dots, f_{d-1}\}$ part of a minimal set of generators of $\mathfrak{m}$ modulo $\mathfrak{m}^2$. Then there exists a field $K\supseteq k$ and a regular $K$-spot $R'$ such that 
		\begin{compactenum}
			\item $R'=K[X_1,X_2,\dots, X_d]_{\langle X_1,X_2,\dots, X_{d-1}, \varphi(X_d)\rangle}$, where $\varphi(X_d)\in K[X_d]$ is an irreducible monic polynomial. Moreover, we may assume that $X_i=f_i$ for $i=1,\dots, d-1$.
			\item $R'\subset R$ is analytically isomorphic along $h$ for some $h\in gR\cap R'$.
		\end{compactenum}
	\et

	We note the following results from \cite[Lemmas 3.6 and 2.5]{Rao2}. 
	
	\bl
		\label{relative analytic}
		Let $(R,\mathfrak{m})$ be a local ring and $\varphi(X)$ be a monic polynomial in $R[X]$. Let $h$ be a Weierstrass polynomial relative to $\varphi$. Then the inclusion map $R[X]\hookrightarrow R[X]_{(\mathfrak{m},\varphi(X))}$ is an analytic isomorphism along $h$.
	\el
	
	\bl
		\label{coprime_imply_comaximal}
		Let $(R,\mathfrak{m})$ be a local ring. Let $F, G\in R[X]$, with either
		$F$ or $G$ a monic polynomial. Assume that $\ol{F}$, $\ol{G}$ are coprime where `bar'
		denotes `modulo $\mathfrak{m}$'. Then $F$ and $G$ are comaximal.
	\el

	The next results can be found in \cite[Lemma 2.8 and Corollary 2.9]{kcr}.
	
	\bp
		\label{analytic variation}
		Let $(A,\mathfrak{m})$ be a local ring and let $R=A[X]$ be a
		polynomial ring. Suppose $h\in R$ is a polynomial in $\langle \mathfrak{m},X\rangle$ such that $h$ is comaximal with any element in $R\setminus \langle \mathfrak{m},X\rangle $, then $R\hookrightarrow R_{(\mathfrak{m}, X)}$ is
		an analytic isomorphism along $h$.
	\ep

	\bc
		\label{pseudo-weierstrass}
		Let $(A,\mathfrak{m})$ be a local ring and $R=A[X]$ be a polynomial ring. Let $\varphi$ be a monic irreducible polynomial in $R$. Suppose $h\in R$ is a polynomial such that $h\in \langle \varphi,\mathfrak{m}\rangle $ and $h$ is comaximal to every element in $R\setminus \langle \varphi,\mathfrak{m}\rangle $. Then the inclusion $R\hookrightarrow R_{\langle \varphi,\mathfrak{m}\rangle }$ is an analytic isomorphism along $h$.
	\ec
	
\subsection{Excision algebras} In this subsection, we recall the excision algebra and some related results.
\bd

Let $R$ be a ring and let $I$ be an ideal of $R$. The excision algebra of $R$ with respect to the ideal $I$ is defined as
\[
R \oplus I := \{(r,i) \mid r \in R,\ i \in I\},
\]
where addition is defined componentwise and multiplication is defined by
$(r,i)(s,j) := (rs,\, rj + si + ij)$.

\ed



We note the following result from \cite[Proposition 3.1]{keshari}.

\bl

\label{keshari}
Let $R$ be a ring of dimension $d$ and $I\subset R$ be an ideal of $R$. Then the excision algebra $R\oplus I$ is an $R$-algebra of dimension $d$.
\el

One can reinterpret the excision algebra $R\oplus I$ as a fiber product of rings $R\times_{R/I} R$. For the definition of the fiber product, we refer to \cite{Milnor1971}. We recall the following definition of the double ring.
 
 \bd
 Consider the fibre product
 \[
 \begin{tikzcd}
 	D(R,I) \arrow{r}{p_1} \arrow{d}{p_2}
 	&R \arrow{d}{\pi}\\
 	R \arrow{r}{\pi} &R/I
 \end{tikzcd}
 \]
 Here the ring $D(R,I)$ is called the double ring of the ring $R$ with respect to the ideal $I$. The ring $D(R,I)$ can be identified with the set $\{(a,b)\in R\times R:a-b\in I\}$.
 \ed
 
 
 \bl
 Let $R$ be a ring and $I$ be an ideal of $R$. Then $D(R,I)$ is canonially isomorphic to the excision algebra $R\oplus I$. 
 \el
 
 \proof One may check that the map
\[
u : R \oplus I \longrightarrow D(R, I),
\]
defined by $u(a, i) = (a, a + i)$, is an isomorphism, and its inverse is given by
\[
v : D(R, I) \longrightarrow R \oplus I,
\]
defined by $v(x, y) = (x, y - x)$. This concludes the proof.\qed

\rmk Any matrix $\alpha \in M_n(R)$ can be identified with an element of $M_n(R \oplus I)$ via the canonical map $i : R \to R \oplus I$ sending $a \mapsto (a,0)$. We refer to this as the canonical lift of $\alpha$. With a slight abuse of notation, we use the same notation to denote the canonical lifts of $\mathrm{I}_n$, $\chi_n$, and $\sigma_n$ in $M_n(R \oplus I)$.



The following two definitions play a crucial role in this article.

\bd
\label{lift of unimodular row}
Let $v\in \Um_n(R,I)$. One can write $v= (1+v_1, v_2,\dots, v_n)$, where $v_i\in I$ for $1\leq i\leq n$. Then by \cite[Lemma 1]{Vasershtein1971} there exists $w=(1+w_1,w_2, \dots, w_n)\in \Um_n(R,I)$ such that $(1+v_1)(1+w_1)+v_1w_1+\dots+ v_n w_n=1$. From this relation, it follows that 
	$$(1,v_1)(1,w_1)+(0,v_2) (0,w_2)+\dots+ (0,v_n)(0,w_n)=(1,0)$$
	in $R\oplus I$. Hence the row $v_L:=((1,v_1), (0,v_2), \dots, (0,v_n))$ is in $ \Um_n(R\oplus I)$. We call $v_L$ the lift of $v$.
\ed

\bd
\label{general lift}
For any $\beta =(b_{ij})\in \M_{m,n}(I)$, we define the lift $\beta_L \in \M_{m,n}(R \oplus I)$ of $\beta$ to be the matrix
$\beta_L := ((0,b_{ij}))$. For any $\alpha \in \GL_n(R,I)$, one can write $\alpha = \mathrm{I}_n + \gamma$ for some $\gamma \in \M_n(I)$. We define the lift $\alpha_L \in \M_n(R \oplus I)$ of $\alpha$ by
$
\alpha_L := \mathrm{I}_n + \gamma_L$, where $\mathrm{I}_n$ is the canonical lift of $\mathrm{I}_n$ in $M_n(R\oplus I)$.

\ed

\bl
\label{lift of invertible}
Let $\alpha\in \GL_n(R,I)$. Then ${\alpha}_L\in \GL_n(R\oplus I,0\oplus I)$. Moreover, if $\alpha\in \SL_n(R,I)$, then ${\alpha_L}\in \SL_n(R\oplus I,0\oplus I)$.
\el

\proof First, we observe that, by the definition of $\alpha_L$, we have $\alpha_L \equiv \I_n \mod (0 \oplus I)$. Since $\alpha \in \GL_n(R,I)$, its inverse $\alpha^{-1}$ also lies in $\GL_n(R,I)$. Hence there exist $\gamma, \delta \in \M_n(I)$ such that $\alpha={\rm I}_n+\gamma$ and $\alpha^{-1}={\rm I}_n+\delta$. Since  $(0,b)(0,c)=(0,bc)$ for all $b,c\in I$, we get that ${\gamma_L}{\delta_L}= (\gamma\delta)_L$. Now
\begin{align*}
	{\alpha_L} {\alpha^{-1}_L}&= {\rm I_n}+{\gamma_L}+{\delta_L}+{\gamma_L} {\delta_L}\\
	&={\rm I}_n +(\gamma+{\delta}+{\gamma} {\delta})_L\\
	&= ({\alpha \alpha^{-1}})_L\\
	&={\rm I}_n.
\end{align*}Therefore, we obtain that ${\alpha_L}\in \GL_n(R\oplus I,0\oplus I)$.

Now suppose that $\alpha \in \SL(R, I)$. Expanding the determinant formula of $\alpha$ and ${\alpha}_L$, one may obtain that $\det(\alpha)=1+x$ and $\det({\alpha_L})=(1,x)$, for some $x\in I$.	Since $\alpha\in \SL_n(R,I)$, we must have $x=0$ and hence $\det({\alpha_L})=(1,0)$. In other words, the matrix ${\alpha_L}\in \SL_n(R\oplus I,0\oplus I)$. This completes the proof.\qed                                  	

\bl
\label{lifts multiplication}
Let $\beta\in \M_{2n,k}(I)$ and $\gamma\in \M_{k,2n}(I)$. Then $\chi_n {\beta_L}=({\chi_n \beta})_L$ and  ${\gamma}_L \chi_n =({\gamma \chi_n})_L$.
\el

\proof We only show that $\chi_n \beta_L = (\chi_n \beta)_L$, as the proof of the other part is identical. We prove the result by induction on $n$.

First, we assume that $n=1$. Let $\beta=\begin{pmatrix}
		b_{11}&b_{12}&\dots &b_{1k}\\b_{21}&b_{22}&\dots& b_{2k}
	\end{pmatrix}$. Then one may note that $$\chi_1 \beta=\begin{pmatrix}
		b_{21} &b_{22}&\dots& b_{2k}\\-b_{11}&-b_{12}&\dots&-b_{1k}
	\end{pmatrix}.$$ Now $(\chi_1 \beta)_L=\begin{pmatrix}
		(0,b_{21})&(0,b_{22})&\dots &(0,b_{2k})\\-(0,b_{11})&-(0,b_{12})&\dots& -(0,b_{1k})
	\end{pmatrix}=\chi_1{\beta_L}$. This concludes the proof for the case $n=1$.

We now assume that the result holds for $n = m$, that is, $\chi_m \beta_L = (\chi_m \beta)L$ for all $\beta \in \M_{2m,k}(I)$.

For $n=m+1$, we write $\beta=\begin{pmatrix}
		A\\B
	\end{pmatrix}$, where $A\in \M_{2m,k}(I)$ and $ B\in \M_{2,k}(I)$. Then, by applying the matrix multiplication rule, one obtains the following. $$\chi_{m+1} \beta =\begin{pmatrix}
		\chi_m&0\\0&\chi_1
	\end{pmatrix} \begin{pmatrix}
		A\\B
	\end{pmatrix}=\begin{pmatrix}
		\chi_m A\\\chi_1 B
	\end{pmatrix}$$ 
Using the induction hypothesis we get the following. 	
	$$({\chi_{m+1} \beta})_L=\begin{pmatrix}
		({\chi_m A})_L\\({\chi_1 B})_L
	\end{pmatrix}=\begin{pmatrix}
		\chi_m {A_L}\\\chi_1 {B_L}
	\end{pmatrix}=\chi_{m+1} \begin{pmatrix}
		{A_L}\\{B_L}
	\end{pmatrix}=\chi_{m+1} {\beta_L}$$This concludes the proof.\qed

\bc
\label{excision of relative symplectic}
Let $\alpha\in \Sp_{2n}(R,I)$. Then ${\alpha}_L\in \Sp_{2n}(R\oplus I,0\oplus I)$.
\ec
\proof Since $\alpha\in \Sp_{2n}(R,I)$ there exists $\beta\in \M_{2n}(I)$ such that $\alpha={\rm I}_{2n}+\beta$. By Lemma \ref{lift of invertible}, we have ${\alpha_L}\in \GL_{2n}(R\oplus I,0\oplus I)$. Now applying Lemma \ref{lifts multiplication}, we get the following. 
\begin{align*}
	{\alpha}^T_L\chi_n {\alpha}_L&={\alpha^T}_L \chi_n {\alpha}_L\\
	&=({\rm I}_{2n}+\beta^T_L)\chi_n ({\rm I}_{2n}+{\beta}_L)\\
	&= \chi_n+\chi_n {\beta_L}+\beta^T_L \chi_n+\beta^T_L \chi_n{\beta}_L\\
	&=({\chi_n +\chi_n\beta+\beta^T \chi_n+\beta^T \chi_n \beta})_L\\
	&={\chi_n}\end{align*}
Hence ${\alpha}_L\in \Sp_{2n}(R\oplus I,0\oplus I)$. This completes the proof.\qed

Combining Lemmas~\ref{lift of invertible} and~\ref{excision of relative symplectic}, we obtain the following result.

\bc
\label{uniform lift}
If $\alpha\in \mathrm{S}(n,R,I)$, then ${\alpha_L}\in\mathrm{S}(n, R\oplus I,0\oplus I)$.
\ec

\remark
\label{lifts under excision}
With the notation as in Corollary~\ref{uniform lift}, we refer to the matrix
$\alpha_L$ as the lift of $\alpha$ in $\mathrm{S}(n, R \oplus I,0\oplus I)$.
For any $\varepsilon \in \E(n,R,I)$, one can have a nice representation of its lift $\varepsilon_L$, which we describe as follows: suppose that
$\varepsilon = \prod \eta_{ij}(x)\,\eta_{kl}(a)\,\eta_{ij}(-x)$,
where $a \in I$ and $x \in R$. Then the lift can be viewed as $\varepsilon_L=
\prod \eta_{ij}((x,0))\,\eta_{kl}((0,a))\,\eta_{ij}((-x,0))$.
It is then clear that ${\varepsilon}_L \in \E(n, R \oplus I,0\oplus I)$. The
matrix ${\varepsilon}_L$ is called the lift of $\varepsilon$ in
$\E(n, R \oplus I)$.


\bl
\label{excision under localisation}
Let $R$ be a ring and $I$ be an ideal of $R$. Let $f\in R$ be a non-zero divisor in $R$. Then $(f,0)$ is a non-zero divisor in $R\oplus I$, and $(R\oplus I)_{(f,0)}$ is canonically isomorphic to $R_f\oplus I_f$. Moreover, if $\alpha\in \mathrm{S}(n, R,I)$ and $\varepsilon\in \E(n, R,I)$, then under the above isomorphism ${(\alpha_L)}_{(f,0)}\mapsto({\alpha_f})_L$ and ${(\varepsilon_L)}_{(f,0)}\mapsto ({\varepsilon_f})_L$.
\el	

\proof First, we prove that $(f,0)$ is a non-zero divisor in $R\oplus I$. For this, we choose an $(r,i) \in R \oplus I$ such that $(f,0)(r,i) = (0,0)$. Then
$(fr,fi) = (0,0)$, which implies that $r = 0$ and $i = 0$, as $f$ is a
non-zero divisor of $R$. This proves that $(f,0)$ is a non-zero divisor in $R \oplus I$.

Now we consider the canonical ring homomorphism
$\phi \colon (R \oplus I)_{(f,0)} \longrightarrow R_f \oplus I_f$,
defined by
$$\phi\!\left(\frac{(r,i)}{(f,0)^n}\right)
= \left(\frac{r}{f^n}, \frac{i}{f^n}\right).$$
Since $(f,0)$ is a non-zero divisor in $R\oplus I$, the map $\phi$ is an isomorphism.

Let $\widetilde{\phi}:\M_n((R \oplus I)_{(f,0)})\longrightarrow \M_n(R_f \oplus I_f)$ be the group homomorphism induce by $\phi$. We choose an $\alpha\in \mathrm{S}(n,R,I)$. Then there exists a matrix $\beta \in \M_{n}(I)$ such that $\alpha={\rm I}_{n}+\beta$, and thus $\alpha_f= \I_{n}+\beta_f$. Then one may observe that $\widetilde{\phi}((\beta_L)_{(f,0)})= ({\beta_f})_L$. Hence we have the following.
\begin{align*}
	\widetilde{\phi}((\alpha_L)_{(f,0)})&=\widetilde{\phi}((\I_{2n}+{\beta_L})_{(f,0)} )\\
	&=\I_{2n}+{(\beta_f)}_{L}\\
	&=(\I_{2n}+{\beta_f})_{L}\\
    &={(\alpha_f)_L}
\end{align*}

Now for an $\varepsilon\in \E(n,R,I)$, we can write $\varepsilon= \prod \eta_{ij}(x)\eta_{kl}((a) \eta_{ij}(-x)$, where $x\in R$ and $a\in I$. Then we have the following.
	\begin{align*}
    \widetilde{\phi}(({\varepsilon_L})_{(f,0)})&= \widetilde{\phi}((\prod \eta_{ij}({x},{0})\eta_{kl}({0},{a}) \eta_{ij}({-x},{0}))_{(f,0)})\\
   &=\prod \eta_{ij}(\frac{x}{1},\frac{0}{1})\eta_{kl}(\frac{0}{1},\frac{a}{1}) \eta_{ij}(\frac{-x}{1},\frac{0}{1})\\&=(\varepsilon_f)_L
	\end{align*}
This concludes the proof.
\qed

\bl
		\label{analytic under excision}
		Let $R'\subset R$ be a subring of $R$. Let $I'\subset R'$ and $ I\subset R$ be two ideals such that 
		\begin{compactenum}[\quad\quad\quad(i)]
			\item $I'=\langle a_1,\dots, a_n\rangle $ is a prime ideal of $R'$, and
			\item $I=\langle a_1,\dots, a_n\rangle R$.
		\end{compactenum}
		Suppose that, there exists $h\in R'\setminus I'$ such that $R'\subset R$ is an analytic isomorphism along $h$. Then the natural inclusion map $R'\oplus I'\inj R\oplus I$ is an analytic isomorphism along $(h,0)$.
	\el
\proof First, we prove that $(h,0)$ is a non-zero divisor in $R \oplus I$. Suppose that, there exists $(r,i) \in R \oplus I$ such that $(h,0)(r,i) = (0,0)$. Then we have $(hr,hi)=(0,0)$. Since $R' \subset R$ is an analytic isomorphism along $h$, the element $h$ is a non-zero divisor in both $R'$ and $R$. Hence $hr=0$ and $hi=0$ imply that $r=0=i$. Therefore, the element $(h,0)$ is a non-zero divisor in $R\oplus I$ and hence in the subring $R'\oplus I'$.
	
	 Let `bar' denote going modulo $\langle (h,0)\rangle R\oplus I$. There exists a natural homomorphism $\phi: \frac{R'\oplus I'}{\langle(h,0)\rangle R'\oplus I'}\to \frac{R\oplus I}{\langle (h,0)\rangle R\oplus I}$. Suppose that $\phi(\overline{(r',i')})=0$, then there exist  $r\in R$ and $i\in I$ such that $r'=hr$ and $i'=hi$. Since $R'\subset R$ is analytic isomorphism along $h$ we have $R'\cap hR=hR'$. Hence there exist $r'',i''\in R'$ such that $r'=hr''$ and $i'=hi''$. As $I'$ is a prime ideal, $hi''=i'\in I'$ and $h\notin I'$ imply that $i''\in I'$. Hence $(r',i')=(h,0)(r'',i'')\in \langle (h,0)\rangle (R'\oplus I')$. This proves that $\phi$ is injective.
	
	To prove the surjectivity of $\phi$, we consider an element $\overline{(r,i)}\in \frac{R\oplus I}{\langle (h,0)\rangle R\oplus I}$. Since $R'\subset R$ is analytic isomorphism along $h$ we have $R=R'+hR$. Hence there exist $r'\in R'$ and $s\in R$ such that $r=r'+hs$. Let $i=a_1x_1+a_2x_2+\dots + a_nx_n$ for some $x_p\in R$. As each $x_p$ can be written as $y_p+ hz_p$ for some $y_p\in R'$ and $z_p\in R$, we obtain that $i=i'+hj$, where $i'=a_1y_1+a_2y_2+\dots+ a_ny_n\in I'$ and $j=a_1z_1+a_2z_2+\dots+a_nz_n\in I$. This in particular shows that  ${\phi((r',i')+\langle (h,0)\rangle R'\oplus I'})=\overline{(r,i)}$. This completes the proof. \qed


\subsection{A relative version of Vorst's lemma}
    
	We begin this subsection with a lemma due to Vorst \cite[Lemma 2.4]{Vorst}. The following version appears in \cite[Lemma 3.12]{br}, where it is proved for $n \ge 6$ in the symplectic case. However, we remark that the same proof works in the case $n=4$ as in this case one can also prove \cite[Proposition 3.10]{br} using \cite[\S 3]{kopeiko}. We skip the proof to avoid repeating the same argument.
	\bl
	
	\label{vorst}
	Let $B \subset A$ be a subring, and let $h\in B$ be such that $B\hookrightarrow A$ is an analytic isomorphism along $h$. Let $n\geq 3$ in the linear case and $n\ge 4$ in the symplectic case. Then we have the following:
    	\begin{compactenum}
    \item If $\alpha \in \E({n},A_h)$, then there exist $\gamma  \in \E({n},A)$ and $\beta \in \E({n},B_h)$ such that $\alpha =\gamma_h \beta$. 
    \item If $\alpha\in \mathrm{S}({n},A)$ with $\alpha_h\in \E({n},A_h)$, then there exist  $\gamma \in \E({n},A)$  and  $\beta \in \mathrm{S}({n},B)$  such that (i) $\beta_h\in \E({n},B_h)$ and (ii) $\alpha=\gamma \beta$.
    	\end{compactenum}
	\el
	
	The following result is a relative version of the previous lemma.

	\bl
	\label{relative symplectic Vorst} 
	Let $R'\subset R$ be a subring of $R$. Let $I'\subset R'$ and $I\subset R$ be ideals of $R'$ and $ R$ respectively. Let $n\geq 3$ in the linear case and $n\ge 4$ in the symplectic case. Suppose that $R'\oplus I'\subset R\oplus I$ is analytic isomorphism along $(h,0)$ for some $h\in R'\setminus I'$. Then we have the following:
	\begin{compactenum}
		\item If $\alpha \in \E({n},R_h, I_h)$, then there exist $\gamma\in \E({n},R, I)$ and $\beta\in \E({n},R'_h, I'_h)$ such that $\alpha= \gamma_h \beta$.
		
		\item If $\alpha\in \mathrm{S}({n},R,I)$ with $\alpha_h\in \E({n},R_h,I_h)$,  then there exist $\beta\in \mathrm{S}(n,R',I')$ and $\gamma \in \E({n},R,I)$ such that $\alpha=\gamma\beta$ and $\beta_h\in \mathrm{E}(n,R'_h,I'_h)$.
	\end{compactenum} 
	\el
	
	\proof 
    
    Let us choose an $\alpha\in \E({n},R_h, I_h)$. Then the lift ${\alpha}_L$ of $\alpha$, as defined in Remark \ref{lifts under excision}, is in $ \E({n},R_h\oplus I_h,0\oplus I_h)$. Since $R'\oplus I'\hookrightarrow R\oplus I$ is an analytic isomorphism along $(h,0)$, using Lemma \ref{vorst}, there exist ${\Gamma}\in \E({n},R\oplus I)$ and $B\in \E({n},(R'\oplus I')_{(h,0)})$ such that\begin{equation}\label{VL}{\alpha}_L= {\Gamma}_{(h,0)}{B}.\end{equation}Moreover, by applying Lemma \ref{excision under localisation}, we may further conclude that $B \in \E(n, R'_h \oplus I'_h)$. Here one may note that we retain the same notation for $B$. In fact, throughout the remainder of the proof, we use the same notation for elements of $\mathrm{S}(n,(R' \oplus I')_{(h,0)})$ and $\mathrm{S}(n, R'_h \oplus I'_h)$.
    
{\textbf{Claim --} We claim that without loss of generality one may further assume that ${\Gamma}\in \E(n, R\oplus I, 0\oplus I)$ and $B\in \E(n, R'_h\oplus I'_h,0\oplus I'_h)$.

To establish this we denote `bar' by going modulo $0\oplus I$. Then we get that $\I_{n}= \overline{{\Gamma}_{(h,0)}} \overline{B}$ and $\overline{{\Gamma}} \in \E(n,R)$. Via the canonical inclusion $R\inj R\oplus I$ one can identify $\E(n,R)$ as a subgroup of $ \E(n, R\oplus I)$. Hence we can view $\overline{{\Gamma}}$ as an element of $\E(n,R\oplus I)$. Therefore, the matrix $${\Gamma} \overline{{\Gamma}}^{-1} \in \E(n, R\oplus I)\cap \mathrm{S}(n, R\oplus I, 0\oplus I).$$ Since the canonical projection map $R\oplus I\to R$ splits, applying Lemma \ref{K_2-lemma} we obtain that ${\Gamma} \overline{{\Gamma}}^{-1}\in  \E(n, R\oplus I, 0\oplus I)$. Thus localizing  we have ${\Gamma}_{(h,0)} \overline{{\Gamma}}_{(h,0)}^{-1}\in \E(n, R_h\oplus I_h, 0\oplus I_h)$.

In a similar way, working with $R'_h\oplus I'_h$, one can show that $ \overline{{B}}^{-1}{B}\in \E(n, R'_h\oplus I'_h, 0\oplus I'_h)$. Then using \ref{VL} we have the following identity.
		\begin{align*}
			{\alpha}_L &={\Gamma}_{(h,0)} {B}\\
			&= {\Gamma}_{(h,0)} \I_{n} B \\
			&= {\Gamma}_{(h,0)} {{\overline\Gamma}^{-1}_{(h,0)}} \overline{B}^{-1} B.
		\end{align*} 
Thus, the claim follows by replacing $\Gamma$ with $\Gamma \overline{\Gamma}^{-1}$ and $B$ with $\overline{B}^{-1} B$.
		
        One may note that the projection map $R_h\oplus I_h \to R_h$ defined by $(x,i)\mapsto x+i$ induces the maps $\E(n, R\oplus I, 0\oplus I)\to \E(n,R,I)$ and $ \E(n, R'_h\oplus I'_h,0\oplus I'_h)\to \E(n,R'_h, I'_h)$. Let $\gamma\in \E(n,R,I)$ and $\beta\in \E(n,R'_h, I'_h)$ be the images of $\Gamma$ and $B$ respectively. Then as a result we have $\alpha =\gamma_h \beta$. This proves (1).

	To prove (2) we consider the lift ${\alpha}_L\in\mathrm{S}(n,R\oplus I)$ of $\alpha$. Since $\alpha_h\in \E({n},R_h,I_h)$, by Lemma \ref{excision under localisation} we have ${(\alpha_L)}_{(h,0)}\in \E(n, (R\oplus I)_{(h,0)})$. As $R'\oplus I'\hookrightarrow R\oplus I$ is an analytic isomorphism along $(h,0)$, using Lemma \ref{vorst}, one can find $B\in \mathrm{S}(n, R'\oplus I')$ and $\Gamma\in \E(n, R\oplus I)$ such that $\alpha_L=\Gamma B$ and $B_{(h,0)}\in \mathrm{E}(n, R'_h\oplus I'_h)$. Further, upto a suitable substitution as incorporated earlier, we may assume that $B\in \mathrm{S}(n, R'\oplus I',0\oplus I')$ and $\Gamma\in \E(n, R\oplus I, 0\oplus I)$.

Let  $\gamma\in \E(n, R,I)$ be the image of ${\Gamma}$ and $\beta\in \mathrm{S}(n, R',I')$ be the image of $B$ under the map induced by the projection $R\oplus I\to R$ sending $(r,i)\mapsto r+i$. Therefore, we have $\alpha= \gamma \beta$. Moreover, using Lemma \ref{K_2-lemma}, we  get that $B_{(h,0)}\in \mathrm{E}(n,R'_h\oplus I'_h,0\oplus I'_h)$. Implying that $\beta_h\in \E(n, R'_h,I'_h)$. This completes the proof.\qed

	We conclude this subsection with a relative version of Suslin's lemma on patching of elementary matrices \cite[Lemma 3.7]{Suslin77}.

	\bc\label{patching} Let $R$ be a ring and $I\subset R$ be an ideal. Let $s,t\in R$ be two non-zero divisors such that $\langle s\rangle +\langle t\rangle=R$, and $st\notin I$. Further assume that $n\geq 3$ in the linear case and $n\ge 4$ in the symplectic case. Then for any $\alpha\in \E(n,R_{st},I_{st})$ there exist $\alpha_1\in \E(n,R_s,I_s)$ and $\alpha_2 \in \E(n, R_t, I_t)$  such that $\alpha= (\alpha_2)_s (\alpha_1)_t$.
	\ec
	
	\proof   By hypothesis, the localization map $R\to R_t$ is injective. Since $s$ and $t$ are co-maximal elements in $R$, we can write $su+tv=1$ for some $u,v\in R$. This yields the relation $(s,0) (u,0)+ (t,0) (v,0)= (1,0)$ in $R\oplus I$. Moreover, by Lemma \ref{excision under localisation}, we have $(s,0)$ and $(t,0)$ are non-zero divisors of $R\oplus I$. Therefore by Example \ref{example of a.i.}, the localization map $R\oplus I\to (R\oplus I)_{(t,0)}$ is an analytic isomorphism along $(s,0)$. Once again by Lemma \ref{excision under localisation}, we may assume that $R\oplus I\subset R_t\oplus I_t$ is analytic isomorphism along $(s,0)$. Since $\alpha\in \E(n, (R_t)_s, (I_t)_s)$, applying Lemma \ref{relative symplectic Vorst}, there exist $\gamma\in \E(n, R_t, I_t)$ and $\beta \in \E(n, R_s, I_s)$ such that $\alpha= \gamma_s \beta$. Now setting $\alpha_2= \gamma$ and $\alpha_1= \beta$ we get the desired result.\qed
		

\subsection{Completion of relative unimodular rows}
In this subsection we treats the completion of relative unimodular rows with respect to  relative linear matrices. We begin this part with the following result, which is implicit in the proof of Lemma \cite[Theorem 5.2]{AG1}. However, for the sake of completeness we give the proof.

\bl
\label{principal implies general}
Let $R$ be a ring  and $n\in \mathbb N$ be an integer. If $\Um_n(R, \ld a \rd)= e_1 \mathrm{S}(n, R, \ld a\rd)$ holds for every principal ideal $\langle a\rangle \subset R$, then $\Um_n(R,I)= e_1 \mathrm{S}(n, R,I)$ for every ideal  $I\subset R$. 
\el

\proof Since $e_1 \mathrm{S}(n,R,I) \subset \Um_n(R,I)$, it is enough to prove the reverse inequality. Let $v=(1-a_1, a_2, \dots, a_n)\in \Um_n(R,I)$. Then one may note that $$v \eta_{12}(-a_2) \eta_{13}(-a_3) \dots \eta_{1n}(-a_n)= v',$$ where $\eta_{i,j}(\lambda)$'s are as defined in \ref{uniform def.}, and $v'=(1-a_1, a_1a_2, \dots, a_1 a_n)\in \Um_n(R, \ld a_1 \rd)$. Then, by hypothesis, we have $v'\in e_1 \mathrm{S}(n, R, \ld a_1 \rd)$. Since $a_i\in I$ for all $i$, one may note that $\mathrm{S}(n, R, \ld a_1 \rd) \subset \mathrm{S}(n, R,I)$ and $\eta:=\prod_{i=1}^n\eta_{1i}(-a_i) \in \E(n, R, I)\subset \mathrm{S}(n,R,I)$. Implying that $v\in e_1 \mathrm{S}(n,R, I)$. This completes the proof.\qed


The following result is a relative version of \cite[Theorem 2.4]{Suslin1984}, see also \cite[Proposition 3.1]{rvdk}. This result is essentially contained in \cite[Theorem 4.1]{AG1}, under the additional assumption that the base field is a perfect $C_1$ field. However, the same proof works in this case as well. For the sake of completeness we give the proof.

\bl
\label{Rao-vdK}
Let $R$ be an affine algebra of dimension $d$ over a field $k$ satisfying: For any  prime $p\leq d$ one of the following condition is satisfied:
\begin{compactenum}
	\item $p\neq \rm{char}~(k)$, $\rm{c.d._p} (k)\leq 1$.
	\item $p=\rm{char}~(k)$ and $k$ is perfect.
\end{compactenum}
Let $I$ be an ideal of $R$. Then $\Um_{d+1}(R, I)= e_1 \SL_{d+1}(R,I)$.
\el

\proof Let us choose a relative unimodular row $v\in \Um_{d+1}(R,I)$. We consider the lift ${v}_L \in \Um_{d+1}(R\oplus I)$ of $v$. By Lemma \ref{keshari}, we obtain that $R\oplus I$ is an affine algebra of dimension $d$ over $k$. Therefore, applying \cite[Proposition 3.1]{rvdk} it follows that ${v}_L\in e_1 \SL_{d+1}(R\oplus I)$ i.e., we have ${v}_L {\Gamma}= e_1$ for some ${\Gamma}\in \SL_{d+1}(R\oplus I)$. Let `bar' denote going modulo $0\oplus I$. Going modulo $0\oplus I$ we obtain that $e_1 \overline{{\Gamma}}= e_1$. Without loss of generality replacing ${\Gamma}$ by ${\Gamma} \overline{{\Gamma}}^{-1}$, we may assume that ${\Gamma}\in \SL_{d+1}(R\oplus I, 0\oplus I)$. Let $\gamma \in \SL_{d+1}(R,I)$ be the image of $\Gamma$ under the map induced by the projection $R\oplus I \to R$ sending $(r,i)\mapsto r+i$. Then we have $v\gamma =e_1$.  This completes the proof. \qed 

We recall the following result from \cite[Theorem 2.8]{Banerjee2025}.

\bl
\label{completion over real varieties}
Let $R$ be an affine algebra of dimension $d$ over $\mathbb{R}$ satisfying one of the following condition, denoted by $\textbf{P}$:
\begin{compactenum}
	\item It has no real maximal ideal
	\item the intersection of all real maximal ideals has height at least $1$.
\end{compactenum}
Then $\Um_{d+1}(R)= e_1 \SL_{d+1}(R)$.
\el

\rmk Throughout this article, we denote the condition in Lemma~\ref{completion over real varieties} by $\textbf{P}$.

\bl
\label{Polynomial of P is P}
Let $R$ be a real affine algebra of dimension $d$ satisfying the condition $\textbf{P}$. Then $R[X]$ will also satisfy the condition $\textbf{P}$.
\el
\proof Suppose that $R$ has no real maximal ideal. With contrary let us assume that $\mathfrak{m}$ is a real maximal ideal of $R[X]$. Then we have a natural inclusion $\mathbb{R} \hookrightarrow R/ \mathfrak{m} \cap R \hookrightarrow R[X]/\mathfrak{m}$. Since $\mathfrak{m}$ is a real maximal ideal of $R[X]$, one must have $R/\mathfrak{m}\cap R \cong \mathbb{R}$. However, this contradicts the fact that $R$ has no real maximal ideal. This proves that $R[X]$ has no real maximal ideal whenever $R$ has no real maximal ideal.

Now, suppose we assume that the intersection of all real maximal ideals of $R$ is of height at least $1$. Let $J$ be the intersection of all real maximal ideals of $R[X]$. Now if $\mathfrak{n}$ is a real maximal ideal of $R[X]$, then by the argument as given in the previous paragraph it follows that $\mathfrak{n}\cap R$ is a real maximal ideal in $R$. Thus by the hypothesis on the height of the intersection of real maximal ideal in $R$ it follows that $\operatorname{ht}(J\cap R)\geq 1$. Hence there exists a non-zero divisor $a\in J\cap R$. However, then $a$ is also a non-zero divisor in $R[X]$, implying that $\operatorname{ht}(J)\ge 1$. This concludes the proof.    \qed

The following result can be deduced from \cite[Proposition~2.14]{Banerjee2025} using the same argument given in Lemma~\ref{Rao-vdK}. Therefore, we omit the proof to avoid repetition.

\bp\label{completion over real affine relative}
Let $R$ be a real affine algebra of dimension $d$ satisfying the condition $\textbf{P}$, and $I$ be an ideal of $R$. Then $\Um_{d+1}(R,I)= e_1 \SL_{d+1}(R,I)$.
\ep


The next result is implicit in \cite[Corollary 4.3]{Banerjee2025}.

\bl
\label{coordinte power over real affine}
Let $R$ be a real affine algebra of dimension $d$ satisfying the condition \textbf{P}. Then for any 
\[
a=(a_1,\dots, a_{d+1})\in \Um_{d+1}(R)
\]
and any natural number $n$, there exists $\varepsilon\in \E_{d+1}(R)$ such that 
\[
a\varepsilon= (b_1,\dots, b_d, b_{d+1}^n).
\]
\el

 
 

 We note the following result which is implicit in the proof of \cite[Theorem 3.1]{rvdk}.
 
 \bl
 \label{suslin transformation of l^th power}
 
Let $n\in \mathbb{N}$, and let $R$ be an affine algebra of dimension $d\ge 2$ over an infinite field $k$ satisfying: For any  prime $p\leq n$ one of the following condition is satisfied:
\begin{compactenum}
	\item $p\neq \rm{char}~(k)$, $\rm{c.d._p} (k)\leq 1$.
	\item $p=\rm{char}~(k)$ and $k$ is perfect.
\end{compactenum}
Then any unimodular row 
 \[
 a = (a_1, \ldots, a_{d+1}) \in \Um_{d+1}(R)
 \]can be transformed via elementary matrices 
 to a unimodular row of the form 
 \[
 b = (b_1, \ldots, b_{d+1}^n).
 \]
 \el
 
The next proposition is a relative version of the above result. The proof essentially follows the argument given in \cite[Theorem 4.4]{AG1}. For the sake of completeness, we give a sketch.
 
 \bp
 \label{relative coordinate power}
Let $n\in \mathbb{N}$, and let $R$ be an affine algebra of dimension $d\ge 2$ over an infinite field $k$ satisfying: For any  prime $p\leq n$ one of the following condition is satisfied:
\begin{compactenum}
	\item $p\neq \rm{char}~(k)$, $\rm{c.d._p} (k)\leq 1$.
	\item $p=\rm{char}~(k)$ and $k$ is perfect.
\end{compactenum}
 Then any unimodular row 
 \[
 a = (a_1, \ldots, a_{d+1}) \in \Um_{d+1}(R, I),
 \]
there exists an $\varepsilon\in \E_{d+1}(R,I)$ such that $a\varepsilon=(b_1,\dots, b_d, b_{d+1}^n)$.
 \ep
 \proof Let $a=(a_1,a_2,\dots, a_{d+1})\in \Um_{d+1}(R,I)$ and $a_1= 1-\mu$, where $\mu\in I$. From Lemma \ref{principal implies general} one may observe the following.  \begin{equation*}
      v\equiv (1-\mu, \mu a_2, \dots, \mu a_{d+1})\pmod{\E_{d+1}(R,I)}
 \end{equation*}
 Hence it is enough to assume that $I=\langle \mu \rangle$ is a principal ideal. Applying Swan's version of Bertini theorem \cite[Theorem 1.3]{rgswan} we can add suitable multiple of $\mu a_2, \mu a_3, \dots, \mu a_{d+1}$ to $a_1$ and assume that $\frac{R}{\langle a_1\rangle}$ is a regular affine algebra of dimension $d-1$. Let `bar' denote going modulo $\langle a_1\rangle$. Then by Lemma \ref{suslin transformation of l^th power} and using the fact that $a_1$ and $\mu$ are comaximal, one can find $b_2, b_3, \dots, b_{d+1}\in I$ such that $$(\ol{b}_2, \ol{b}_3, \dots, \ol{b}_{d+1})\in \Um_d\big(\frac{R}{\langle a_1 \rangle}\big)\text{ and } (\ol{a}_2, \ol{a}_3, \dots, \ol{a}_{d+1})\ol{\varepsilon}=(\ol{b}_2, \ol{b}_3, \dots, \ol{b}_{d+1}^n),$$ where $\ol{\varepsilon}\in \E_{d-1}(\frac{R}{\langle a_1\rangle R})$. Again using the fact that $a_1$ and $\mu$ are comaximal, we can lift $\ol\varepsilon$ to a matrix $\varepsilon\in \E_d(I)$. Then we have $$(a_2, a_3,\dots, a_{d+1})\varepsilon\equiv (b_2, b_3, \dots, b_{d+1}^n)\pmod{I\langle a_1\rangle}.$$ Hence we obtain that
 \begin{align*}
 	(a_1,a_2, \dots, a_{d+1})\varepsilon_1=(a_1, b_2,\dots, b_{d+1}^n)
 \end{align*}
 for some $\varepsilon_1\in \E_{d+1}(R,I)$. This completes the proof.\qed

We conclude this section with the following results that can be obtained using \cite[Corollary 4.3]{Banerjee2025}, \cite[Theorem 7.5]{FRS} (see also \cite[Theorem 4.4]{AG1}) and the argument given in Proposition~\ref{relative coordinate power}. Hence, we skip the proofs.

 \bp
 \label{relative coordinate power over real}
 Let $R$ be a real affine algebra of dimension $d\ge 2$ satisfying the condition \textbf{P} and $I$ be an ideal of $R$. Then for any unimodular row
  \[
 a=(a_1, \dots, a_{d+1})\in\Um_{d+1}(R,I)
 \]
 and any $n\in \mathbb{N}$, there exists an $\varepsilon\in \E_{d+1}(R,I)$ such that $a\varepsilon=(b_1,\dots, b_d, b_{d+1}^n)$.
 \ep
	
	\bp\label{relative coordinate power over alg closed}
	Let $R$ be a regular affine algebra of dimension $d\geq 4$ over an algebraically closed field $k$. Let $I$ an ideal of $R$. 
    Then for any unimodular row
  \[
 a=(a_1, \dots, a_{d})\in\Um_{d}(R,I)
 \]
 and any $n\in \mathbb{N}$ which is prime to $\rm{char}(k)$, there exist an $\varepsilon\in \E_{d+1}(R,I)$ such that $a\varepsilon=(b_1,\dots, b_d, b_{d+1}^n)$. In particular, if $\frac{1}{(d-1)!}\in k$, then $\Um_d(R,I)=e_1 \SL_d(R,I)$. 
	\ep

	\section{A symplectic $\mathrm{K_2}$-Lemma}\label{K2}
	
The main result of this section is Theorem \ref{symplectic excision}, whose linear version is due to Suslin \cite[$\S 4$]{Suslin77}. To establish this, we require some preparations. We begin with the following remark.
	
	\smallskip
	
	\remark
		
	Let $R$ be a ring. Let $n \geq 3$ and let $R^{2n}$ denote the free $R$-module with basis $\{e_{1}, e_{2}, \dots, e_{2n}\}$. Any element $v \in R^{2n}$ can be written as $v=\sum_{i=1}^{2n} v_i e_i$. We equip $R^{2n}$ with the structure of a symplectic module with respect to the alternating bilinear form $\langle \,,\, \rangle$ defined by $\langle e_i,e_j\rangle = (-1)^{i+1} \delta_{i,\sigma(j)}$. We shall use the notation $\varepsilon_i$ to denote $(-1)^{i+1}$. The symplectic group $\Sp_{2n}(R)$ coincides with the group of automorphisms of $R^{2n}$ which preserves this alternating form, see \cite{lavrenov} for details.
	
	We recall the following definition from \cite{lavrenov}.
	\bd(Eichler–Siegel–Dickson transformations)
		Let $a\in R$ and let $u,v\in R^{2n}$ be such that $\langle u,v\rangle=0$. The transformation $T(u,v,a):R^{2n}\to R^{2n} $
		defined by $$T(u,v,a)(w)=w+(\langle v,w\rangle +a\langle u,w \rangle)u+\langle u,w \rangle v$$
		is called symplectic ESD-transformation.
	\ed
	
We recall the following results from \cite[Lemmas 1 and 2]{lavrenov}.
	
	\bl
		\label{ESD relation I}
		Let $u,v,w\in R^{2n}$ be three vectors such that $\langle u,v \rangle =0= \langle u,w \rangle$, and $a,b\in R$. Then 
		\begin{compactenum}
			\item $T(u,v,a)\in \Sp(R^{2n})= \Sp_{2n}(R)$,
			\item $T(u,v,a)T(u,w,b)=T(u,v+w, a+b+\langle v,w \rangle)$,
			\item $T(u,av,0)=T(v,au,0)$,
			\item $gT(u,v,a)g^{-1}=T(gu,gv,0)$ for all $g\in \Sp(R^{2n})=\Sp_{2n}(R)$.
		\end{compactenum}
	\el
	
	\bl
		\label{ESD relation II}
		Let $u,v\in R^{2n}$ be such that $u_{i}=u_{\sigma(i)}=v_i=v_{\sigma(i)}=0, \langle u,v \rangle =0$, and let $ a\in R$. Then we have the following commutator formula:
	$$[T(e_i,u,0), T(e_{\sigma(i)},v,a)]=T(u,\epsilon_iv,a) T(e_{\sigma(i)}, -\varepsilon_{\sigma(i)} a u,0),$$ where $[x,y] = xyx^{-1}y^{-1}$.
	\el
	
	\bd

		Let $n \geq 3$ and let $i,j \in \{1,\dots,2n\}$ with $i \notin \{j,\sigma(j)\}$. For any $a \in R$, the transformation $T_{i,j}(a)$, defined by $T(e_i,\varepsilon_{\sigma(j)}a e_{\sigma(j)},0)$, and the transformation $T_{i,\sigma(i)}(a)$, defined by $T(e_i,0,\varepsilon_i a)$, are called elementary symplectic transvections. The subgroup of $\Sp_{2n}(R)$ generated by these elementary symplectic transvections is called the elementary symplectic group and is denoted by $\ESp(R^{2n})$.
	\ed
	
	The next lemma identifies the group $\ESp(R^{2n})$ with $\ESp_{2n}(R)$. To establish this, it suffices to verify that the matrix of an elementary symplectic transvection, with respect to a fixed basis of $R^{2n}$, is a generator of the elementary symplectic group $\ESp_{2n}(R)$. A direct computation shows that, with respect to the basis $\{e_1, e_2, \dots, e_{2n}\}$, the matrix of $T_{i,j}(a)$ is $se_{ij}(a)$ for $i \neq \sigma(j)$, while the matrix of $T_{i,\sigma(i)}(a)$ is $se_{i\sigma(i)}(a)$. We therefore omit the proof.
	
	\bl
	\label{ESD generators are elementary}
		With respect to the above notation, one has $\ESp(R^{2n})\cong\ESp_{2n}(R)$.
	\el

	The next lemma establishes some useful identities. The proof is essentially uses Lemma \ref{ESD relation I}, and hence we skip the proof.
	
	\bl
The elementary symplectic group $\ESp_{2n}(R)$ possesses the following properties:
\begin{align}
	&se_{ij}(a)=se_{\sigma(j), \sigma(i)}(-\varepsilon_i \varepsilon_j a),\label{E0}\tag{E0}\\
	&se_{ij}(a)se_{ij}(b)=se_{ij}(a+b),\label{E1}\tag{E1}\\
	&[se_{ij}(a),se_{hk}(b)]=1, \text{for $h\notin \{j,\sigma(i)\}, k\notin \{i,\sigma(j)\}$}\label{E2}\tag{E2}\\
	&[se_{ij}(a),se_{jk}(b)]=se_{ik}(ab), \text{for $i\notin \{\sigma(j),\sigma(k)\}, j\neq \sigma(k),$}\label{E3}\tag{E3}\\
	&[se_{i,\sigma(i)}(a),se_{\sigma(i),j}(b)]=se_{ij}(ab) se_{\sigma(j),j}(\varepsilon_i \varepsilon_j ab^2),\label{E4}\tag{E4}\\
	&[se_{ij}(a),se_{j,\sigma(i)}(b)]=se_{i,\sigma(i)}(2ab).\label{E5}\tag{E5}
\end{align}
\el

We recall next the definition of the symplectic Steinberg group $\StSp_{2n}(R)$.
	
	\bd
		The symplectic Steinberg group $\StSp_{2n}(R)$ is defined as the group generated by formal symbols $X_{ij}(a)$, where $i \neq j$ and $a \in R$, subject to the following Steinberg relations:
		\begin{align}
			&X_{ij}(a)=X_{\sigma(j),\sigma(i)}(-\varepsilon_i \varepsilon_j a),\label{R0}\tag{R0}\\
			&X_{ij}(a)X_{ij}(b)=X_{ij}(a+b),\label{R1}\tag{R1}\\
			&[X_{ij}(a),X_{hk}(b)]=1, \text{for $h\notin \{j,\sigma(i)\}, k\notin \{i,\sigma(j)\}$}\label{R2}\tag{R2}\\
			&[X_{ij}(a),X_{jk}(b)]=X_{ik}(ab), \text{for $i\notin \{\sigma(j),\sigma(k)\}, j\neq \sigma(k),$}\label{R3}\tag{R3}\\
			&[X_{i,\sigma(i)}(a),X_{\sigma(i),j}(b)]=X_{ij}(ab) X_{\sigma(j),j}(\varepsilon_i \varepsilon_j ab^2),\label{R4}\tag{R4}\\
			&[X_{ij}(a),X_{j,\sigma(i)}(b)]=X_{i,\sigma(i)}(2ab).\label{R5}\tag{R5}
		\end{align}
	
	\ed
		It follows from \cite[Lemma 3]{lavrenov} that there is a natural epimorphism $$\phi: \StSp_{2n}(R)\twoheadrightarrow \ESp_{2n}(R)$$ sending the generators $X_{ij} (a)\mapsto T_{ij} (a)$. 
	
	\bd
		The kernel of the natural map $\phi: \StSp_{2n}(R)\twoheadrightarrow \ESp_{2n}(R)$ is denoted by $\mathrm{KSp_{2,2n}}(R)$.
	\ed
	
	The group $\Sp_{2n}(R)$ embeds into $\Sp_{2n+2}(R)$ via the homomorphism $\alpha \mapsto {\rm I}_2 \perp \alpha$. Hence we may take the direct limit $\varinjlim \Sp_{2n}(R)$, which we denote by $\Sp(R)$. Similarly, we set $\ESp(R) := \varinjlim \ESp_{2n}(R)$. On the other hand, the map $\iota:\StSp_{2n}(R)\to \StSp_{2n+2}(R)$ sending $X_{ij}(r)\mapsto X_{ij}(r)$ is a homomorphism, and we define $\StSp(R) := \varinjlim \StSp_{2n}(R)$. Then there is a natural surjection $\phi:\StSp(R)\twoheadrightarrow \ESp(R)$, by abuse of notation we still call it $\phi$. The kernel of $\phi$ is denoted by $\mathrm{KSp}_2(R)$.

We now recall the following definition from \cite[5.5F*]{HO}.

	\bd
		For $r,s\in R^*$, we define the following:

		\begin{compactenum}[\quad\quad\quad\quad]
		\item $sw_{ij}(r):=X_{ij}(r) X_{ji}(-r^{-1}) X_{ij}(r)$,
		\item $sh_{ij}(r):=sw_{ij}(r)sw_{ij}(-1)$,
		\item $\{r,s\}_{ij}:=sh_{ij}(rs)sh_{ij}(r)^{-1} sh_{ij}(s)^{-1}$.
		
	\end{compactenum}

	\ed
	
	We note down the following result from \cite[5.6A*]{HO}.
	
	\bl\label{symbol}
		If $r,s\in R^*$, then the symbol $\{r,s\}_{ij}\in \mathrm{KSp_{2,2n}}(R)$ and is independent of the choice of the pair $(i,j)$ such that $1\leq i\neq j\leq 2n$. 
	\el
	
	\remark
		There are two types of symbols $\{a,b\}_{ij}$ in $\mathrm{KSp_{2,2n}}(R)$, depending on whether $i = \sigma(j)$ or not. In both cases, the symbols are independent of the choice of the pair $(i, j)$ with $1 \leq i \neq j \leq 2n$. We denote the symbols $\{r,s\}_{13}$ by $\{r,s\}$ and $\{r,s\}_{12}$ by $[r,s]$. The relation between these two symbols is $\{r,s\}=[r^2,s]$ (cf. \cite[page no. 279, 5.6.2]{HO}).

The proof of the next lemma can be found in \cite[page no. 279, 5.6.3]{HO}.
	
	\bl
		Suppose that $R$ is a field and  $r,s,t\in R^*$, then 
		\begin{compactenum}
			\item $[r,1]=[1,r]=1$,
			\item $[rs,t][r,s]=[r,st][s,t]$,
			\item $[r,s]=[s^{-1},r]$,
			\item $[r,s]=[r,-rs]$,
			\item $[r,s]=[r,(1-r)s]$, if $r\neq 1$
		\end{compactenum}
	\el
	
	The following lemma is due to Matsumoto see \cite{Matsumoto} or \cite[page no 377, 6.5.11]{HO}.
	\bl
		\label{symplectic Matsumoto}
		Let $n\geq 3$ and let $R$ be a field. Then the set $\{[r,s]: r,s\in R^*\}$ generates $\mathrm{KSp_{2,2n}}(R)$, and provides with the relations
		\begin{compactenum}
			\item $[r,1]=[1,r]=1$,
			\item $[rs,t][r,s]=[r,st][s,t]$,
			\item $[r,s]=[s^{-1},r]$,
			\item $[r,s]=[r,-rs]$,
			\item $[r,s]=[r,(1-r)s]$, if $r\neq 1$
		\end{compactenum}
		a presentation of $\mathrm{KSp_{2,2n}}(R)$ as an abelian group.
	\el
	
	%
	%
	%
	%
	
	Various approaches to the relative Steinberg group are described in \cite{lavrenov}, \cite{keune}, \cite{loday}, and \cite{sinchuk}. We follow the setup of \cite{lavrenov}. 

We proof the following result for the sake of completeness.

\bp 
\label{kernel of steinberg group}
Let $f: R\twoheadrightarrow S$ be a surjective homomorphism with the kernel $I$. Then the kernel of the map $f_*:\StSp_{2n}(R)\to \StSp_{2n}(S)$ induced by $f$ is the normal subgroup of $\StSp_{2n}(R)$ generated by the set $\{X_{ij}(a): a\in I\}$. 
\ep

\proof Let $H$ be the normal subgroup of $\StSp_{2n}(R)$ generated by $\{X_{ij}(a):a\in I\}$. Clearly $f_*(H)=1$, and hence $H \subset \ker({f_*} )$. Therefore, the map $f_*$ induces a homomorphism $\ol{f_*}: \StSp_{2n}(R)/H \to \StSp_{2n}(S)$ making the following triangle commute.

$$\begin{tikzcd}
	\StSp_{2n}(R) \arrow{rr}{f_*}\arrow{rd}  && \StSp_{2n}(S)  \\
&\frac{\StSp_{2n}(R)}{H} \arrow{ur}[swap]{\ol{f_*}}  
\end{tikzcd}$$
To conclude the proof, it suffices to show that the map $\overline{f_*}$ is an isomorphism. The remainder of the proof is devoted to this. 

We define a homomorphism
$g: \StSp_{2n}(S) \longrightarrow \StSp_{2n}(R)/H$
as follows: for a generator $X_{ij}(s) \in \StSp_{2n}(S)$, we choose an element $r \in R$ such that $f(r) = s$, and define $g(X_{ij}(s)) := X_{ij}(r)H.$

To verify that $g$ is well defined, suppose that we choose two different lifts $r, r' \in R$ of $ s$, i.e., satisfying
$f(r) = f(r') = s$. Then the element $a:=r-r'\in I$. Since $X_{ij}(a) \in H$, we have $$g(X_{ij}(r'))=g(X_{ij}(r)),$$ and hence, the definition of 
 $g(X_{ij}(s))$ is independent of the choice of a lift $r$ of $s$. Moreover, since the generators $X_{ij}(s)$ satisfy the relations \ref{R0} to \ref{R5}, its image $X_{ij}(r)H$ also satisfy the relations \ref{R0} to \ref{R5}. Therefore, the map $g:\StSp_{2n}(S) \to \StSp_{2n}(R)/H$ is a well defined. Now it follows from the definition of $g$ that both the compositions $g\circ\ol{f_*}$ and $\ol{f_*}\circ g$ are the identity maps. Hence $\ol{f_*}$ is an isomorphism. This concludes the proof.\qed

We now define the relative symplectic Steinberg group.
	
\bd
Let $R$ be a ring and let $I$ be an ideal of $R$. The relative symplectic Steinberg group $\StSp_{2n}(R,I)$ is defined as the kernel 
of the homomorphism $\StSp_{2n}(R) \to \StSp_{2n}(R/I)$, induced by the canonical projection $R \twoheadrightarrow R/I$.
\ed

	In the linear case, the following result is due to Suslin \cite{Suslin77}. Our proof follows the argument given in \cite[Lemma 4.3]{GM}.
	
\bp
		\label{symlpectic K_2 lemma}
		Let $R$ be a ring and $I\subset R$ be an ideal such that the canonical map $\psi:\mathrm{KSp_{2,2n}}(R)\to\mathrm{ KSp_{2,2n}}( R/I)$ is surjective. Then $\ESp_{2n}(R,I)=\Sp_{2n}(R,I)\cap \ESp_{2n}(R)$.
	\ep
	
\proof We consider the following diagram of short exact sequences.
	
	\[
	\begin{tikzcd}
		& &0\arrow{d} &0\arrow{d}\\
		& &\StSp_{2n}(R,I)\arrow{r}{\widetilde{\phi}} \arrow{d}{} & \ESp_{2n}(R)\cap \Sp_{2n}(R,I)\arrow{d}{}\\
		0\arrow{r} &\mathrm{KSp_{2,2n}}(R)\arrow{r} \arrow{d}{\psi}&\StSp_{2n}(R)\arrow{r}{\phi} \arrow{d} & \ESp_{2n}(R)\arrow{r} \arrow{d}{}&0\\
		0\arrow{r} &\mathrm{KSp_{2,2n}}(R/I)\arrow{r} &\StSp_{2n}(R/I)\arrow{r}{}&\ESp_{2n}(R/I) \arrow{r} &0
	\end{tikzcd}
	\]
	Here $\widetilde{\phi}$ is induced by the map $\phi:\StSp_{2n}(R)\to \ESp_{2n}(R)$. Using Proposition \ref{kernel of steinberg group}, the group $\StSp_{2n}(R,I)$ is the normal subgroup of $\StSp_{2n}(R)$ generated by Steinberg symbols $X_{ij}(a)$, where $a\in I$. Hence we have $\widetilde{\phi}(\StSp_{2n}(R,I))=\ESp_{2n}(R,I)$. Therefore, it is enough to show that $\widetilde{\phi}$ is surjective. However, applying Snake lemma and the fact that $\psi$ is surjective, this follows.\qed
	
	Now we are ready to prove a Symplectic $\mathrm{K_2}$-Lemma. The linear version of this theorem is due to Suslin \cite[$\S$ 4]{Suslin77}.

	\bt
		\label{symplectic excision}
		Let $(R,\mathfrak{m})$ be a local ring. Then for all $n\geq 3$, we have the following. $$\Sp_{2n}(R[X],\mathfrak{m}[X])\cap \ESp_{2n}(R[X])=\ESp_{2n}(R[X],\mathfrak{m}[X])$$ 
	\et
	
	\proof We first observe that, in view of Proposition \ref{symlpectic K_2 lemma}, it is enough to show that the canonical map $\psi:\mathrm{ KSp_{2,2n}}(R[X]) \to\mathrm{ KSp_{2,2n}}(R[X]/\mathfrak{m}[X])$
	is surjective. The proof is devoted to establish this fact.
	
	Let $k=R/\mathfrak{m}$ be the residue field of $(R,\mm)$. Since $k$ is a field, applying \cite[Korollar zu Satz 1]{Rehmann} we have $\mathrm{KSp_{2,2n}}(k[X])\simeq\mathrm{KSp_{2,2n}}(k)$. Hence by Lemma \ref{symplectic Matsumoto}, the group $\mathrm{KSp_{2,2n}}(k[X])$ is generated by symbols {$[u,v]$ such that $u,v\in (k[X])^*= k\setminus 0$. Since $R$ is a local ring, the units of $k$ can be lifted 
	to the units of $R$, and hence to the units of $R[X]$. For each symbol $[u,v]$, we choose lifts $r,s\in R^*\subset (R[X])^*$ of $u$ and $v$. Then it follows from Lemma \ref{symbol} that the symbols $[r,s]\in\mathrm{ KSp_{2,2n}}(R[X])$. This concludes the proof.\qed

		\section{A relative version of Vorst's theorem}\label{Vorst}

The main purpose of this section is to prove Theorem \ref{relative vorst theorem}, which plays a crucial role in the context of this article. In the absolute case, this result was established by Vorst \cite[Theorem 3.3]{Vorst} for linear groups and by Basu–Rao \cite[Theorem 3.8]{br} for symplectic groups. To proceed, we first recall the following theorem from \cite[Theorem 3.9]{AG1}.

	\bl
		\label{equality of linear and symplectic}
		Let $R$ be a ring and $I$ be an ideal of $R$. Let $v\in \Um_{2n}(R,I)$. Then $v\E_{2n}(R,I)=v\ESp_{2n}(R,I)$, for all $n\geq 2$.
	\el

	%
%
%
%
%
%
The next result addresses the monic inversion principle for linear and symplectic elementary matrices. In the linear case the proof is given in \cite[Corollary 5.7]{Suslin77}, and in the symplectic case the proof is given in \cite[Theorem 3.10]{kopeiko}.

	\bl
		\label{monic inversion for symplecic matrix}
		Let $n$ be an integer. In addition, in the symplectic case, assume that $n \ge 4$. Let $R$ be a ring and $\alpha\in \mathrm{S}({n}, R[X])$.  If there exists a monic polynomial $f\in R[X]$ such that $\alpha_f\in \E({n},R[X]_f)$, then $\alpha\in \E({n},R[X])$.
	\el
	
	The following result is a relative version of the previous lemma.
	
	\bc
		\label{relative monic inversiion for symplectic matrix}
Let $n$ be an integer. In addition, in the symplectic case, assume that $n \ge 4$. Let $R$ be a ring and $I$ be an ideal of $R$. Let $\alpha\in \mathrm{S}({n},R[X],I[X])$. If there exists a monic polynomial $f\in R[X]$ such that $\alpha_f\in \E({n},R[X]_f,I[X]_f)$, then $\alpha\in \E(n,R[X],I[X])$.
	\ec
	
	\proof Consider the lift ${\alpha}_L$ of $\alpha$ as defined in \ref{general lift}. Using Lemma \ref{lift of invertible} and Corollary \ref{excision of relative symplectic} we obtain that the lift ${\alpha}_L\in\mathrm{S}({n},(R\oplus I)[X],(0\oplus I)[X])$. Then one may notice that $(f,0)$ is a monic polynomial in $(R\oplus I)[X]$. Since $\alpha_f\in \E({n},R[X]_f,I[X]_f)$ by Remark \ref{lifts under excision} we have $(\alpha_L)_{(f,0)}\in \E({n},(R\oplus I)[X]_{(f,0)})$. Hence using Lemma \ref{monic inversion for symplecic matrix}, we get that ${\alpha}_L\in \E({n},(R\oplus I)[X])$. Now applying Lemma \ref{K_2-lemma}, one may observe the following. $${\alpha}_L\in \mathrm{S}({n},(R\oplus I)[X],(0\oplus I)[X])\cap \E({n},(R\oplus I)[X])=\E({n},(R\oplus I)[X],(0\oplus I)[X])$$ Then we consider the projection map from $$R[X]\oplus I[X]\to R[X]\text{ sending }(f(X),g(X))\mapsto f(X)+g(X).$$ Under this map  $\E({n},(R\oplus I)[X],(0\oplus I)[X])$ goes to $\E({n},R[X],I[X])$ and ${\alpha_L}$ goes to $\alpha$. Hence, we obtain that $\alpha\in \E({n},R[X],I[X])$. This concludes the proof. \qed

We now present a relative version of Vorst's theorem \cite[Theorem 3.8]{br}.

	\bt
		\label{relative vorst theorem}
		Let $R$ be a regular $k$-spot of dimension $d$ over an infinite field $k$. Let $I$ be an ideal of $R$ such that $R/I$ is regular. Then $$\mathrm{S}({n},R[X],I[X])=\E({n},R[X],I[X]),$$ where in the linear case $n\ge 3$, and in the symplectic case $n\ge 6$.
	\et
	
	\proof We begin with some reduction steps. If $I=R$, then the result follows from Lemma \ref{Symplectic K_1}. Further, if $d=0$, then $R$ is a field and $I=0$. In this case, we have $\mathrm{S}({n},R[X],I[X])=\{I_{n}\}=\E({n},R[X],I[X])$. Hence, without loss of generality, we may assume that $I$ is a proper ideal of $R$ and $d>0$. Since a regular local ring is an integral domain, the ideal $I$ is a prime ideal of $R$. Now, if $I$ is the maximal ideal then the result follows from \cite[$\S$ 4]{Suslin77} or Theorem \ref{symplectic excision}. Hence one may further assume that $I\subsetneq \mathfrak{m}$. Moreover, one may observe that the case of an arbitrary infinite ground field can be reduced to a separating infinite ground field by the same argument as given at the end of the proof of \cite[Theorem 3.13]{kcr}. Hence we further assume that $R$ is a regular $k$-spot with separating infinite ground field $k$. The remainder of the proof is divided into the following steps.
	
	%
	%
	
	\paragraph{\textbf{Step - 1}} It follows from Lemma \ref{quotient of regular is regular} that there exists a regular system of parameters $f_1,f_2,\dots, f_d\in R$ such that $$I=\langle f_1,f_2,\dots, f_c\rangle, \text{ for some }c< d .$$ Since $R$ is a regular local ring, using \cite[Proposition 2.2.5, page no. 66]{BH} we may assume that $\{f_1,\dots,f_d\}$ is a minimal set of generators of $\mathfrak{m}/\mathfrak{m}^2$ under a suitable substitution. Now, let $S=R\setminus I$ and let $\alpha\in \mathrm{S}({n},R[X],I[X])$. Then we have $\alpha_S\in \mathrm{S}({n},R_S[X],I_S[X])$. Applying Lemma \ref{Symplectic K_1}, we have $\alpha_S\in \E({n},R_S[X])$. Therefore, the matrix $\alpha_S\in \mathrm{S}({n},R_S[X], I_S[X])\cap \E({n},R_S[X])$. Since $I_S$ is the maximal ideal of $R_S$, by \cite[$\S$ 4]{Suslin77} or Theorem \ref{symplectic excision}, it follows that $\alpha_S\in \E({n},R_S[X], I_S[X])$. One can clear denominators
	and find a suitable element $g\in S$ such that $\alpha_g\in \E({n},R_g[X], I_g[X])$. As $f_1, f_2,\dots, f_{d-1}$ are part of a basis of $\mathfrak{m}/\mathfrak{m}^2$ and $R$ is a regular local ring, it follows from the proof of \cite[Theorem 2.8]{budh} that the sequence $g^2,f_1,f_2,\dots, f_{d-1}$ is a regular sequence in $R$. Hence by Theorem \ref{NRao}, there exists a field $K\supset k $ and a regular $K$-spot $R'\subset R$ such that 
	
	\begin{compactenum}[\quad\quad\quad (1)]
		\item  $R'=K[X_1,X_2,\dots, X_d]_{\ld X_1,\dots, X_{d-1}, \varphi(X_d)\rd}$, where $\varphi(X_d) \in K[X_d]$ is an irreducible monic polynomial, $X_i=f_i$ for $i=1,\dots,d-1$ and
		\item $R'\hookrightarrow R$ is an analytic isomorphism along $h$ for some $h\in g^2R\cap R'$.
	\end{compactenum}
	Applying \cite[Lemma 2.17]{kcr}, it follows that $g^2$ and $h$ differ by a unit in $R$, hence we may further assume that $h=g^2$. Now, if $h\not\in \mathfrak{m}$, then $\alpha_h\in \E({n},R_h[X], I_h[X])$ implies that $\alpha\in \E({n},R[X], I[X])$. Therefore, without loss of generality we may further assume that $h\in \mathfrak{m}$. Moreover, applying the same argument finitely many times we may assume that every factor of $h$ lies in $\mathfrak{m}$.
	
	We define $I':=\langle X_1,X_2,\dots, X_c\rangle R'\subset I$. Since $R'/I'$ is a localization of a polynomial extension over the field $K$, the ideal $I'$ is a prime ideal of $R'$. Hence applying Lemma \ref{analytic under excision}, we obtain that the canonical inclusion $$R'[X]\oplus I'[X]\hookrightarrow R[X]\oplus I[X]$$ is an analytic isomorphism along $(h,0)$. As $\alpha_h\in \E({n},R_h[X], I_h[X])$, it follows from a relative version of Vorst's lemma \ref{relative symplectic Vorst} that there exist $\beta\in \mathrm{S}({n},R'[X],I'[X])$ and $\gamma \in \E({n},R[X], I[X])$ such that 
	\begin{compactenum}[\quad\quad\quad (i)]
		\item $\alpha=\gamma \beta$ and
		\item  $\beta_h\in \E({n},R'_h[X], I'_h[X])$.
	\end{compactenum}
	One may note that to conclude the proof it is enough to show that $\beta\in \E({n},R'[X], I'[X])$. The remaining part is devoted to show this only. 
	
	\paragraph{\textbf{Step - 2}} Clearing denominators, if necessary, we may further assume that $$h\in \langle X_1,\dots, X_{d-1}, \varphi(X_d)\rangle\subset K[X_1,X_2,\dots, X_d]. $$Applying Nagata's change of variables theorem \ref{Nagata}, there exists a change of variables in $K[X_1,\dots,X_d]$, namely $X_i \mapsto X_i+\varphi(X_d)^{r_i}$ for $1 \leq i \leq d-1$ and $X_d \mapsto X_d$, such that $h$ becomes a monic polynomial in $X_d$ over $K[X_1,\dots,X_{d-1}]$.

\paragraph{\textbf{Claim}} We claim that $h$ is comaximal with any element belongs to $ K[X_1,X_2,\dots, X_d]\setminus \langle X_1,\dots,X_{d-1}, \varphi(X_d)\rangle $.

	\paragraph{Proof of the claim} Let $f\in K[X_1,X_2,\dots, X_d]\setminus \langle X_1,\dots,X_{d-1}, \varphi(X_d)\rangle $. Then we show that $h$ and $f$ are comaximal. Since an automorphism maps units to units, it suffices to show that, under some automorphism $h$ and $f$ become comaximal. 
	 
	  As every factor of $h$ lies in $\mm$ and $f\notin \mm$, the elements $f$ and $h$ does not have any common factor. Then by Lemma \ref{ojanguren}, there exists an automorphism of $K[X_1,\dots, X_d]$ under which $h$ and $f$ are monic in $X_d$ such that $h(0,\dots,0, X_d)$ and $f(0,\dots, 0, X_d)$ are coprime. Hence using Lemma \ref{coprime_imply_comaximal}, we get $f$ and $h$ are comaximal. This proves the claim.

	\paragraph{\textbf{Step - 3}}We define $R'':=K[X_1,\dots, X_{d-1}]_{\langle X_1,\dots, X_{d-1}\rangle }[X_d]$. Then one may note that we still have $h\in \langle X_1,\dots,X_{d-1}, \varphi(X_d)\rangle \subset K[X_1,X_2,\dots, X_d]$.  Further, applying Lemma \ref{pseudo-weierstrass}, we get the inclusion $R''\hookrightarrow R'$ is an analytic isomorphism along $h$. Hence the inclusion $R''[X]\hookrightarrow R'[X]$ is also an analytic isomorphism along $h$. We consider the prime ideal $I''=\langle X_1,\dots, X_c\rangle R''$. Then by Lemma \ref{analytic under excision}, we have $$R''[X]\oplus I''[X]\hookrightarrow R'[X]\oplus I'[X]$$is an analytic isomorphism along $(h,0)$. Since $\beta_h\in \E({n},R'_h[X], I'_h[X])$, by relative symplectic version of Vorst's Lemma \ref{relative symplectic Vorst}, there exist $\delta\in \E({n},R'[X], I'[X])$ and $\theta \in\mathrm{S}({n},R''[X], I''[X])$ such that $\beta= \delta \theta$ and $\theta_h\in \E({n},R''_h[X], I''_h[X])$. Again as before, one may observe that to establish the fact that $\beta\in \E({n},R'[X], I'[X])$ it is enough to prove that $\theta\in \E({n},R''[X], I''[X])$. In the remaining part we show this only. 
	
It follows from the definition of $I''$ that $I''[X]=\langle X_1,\dots, X_c\rangle R''[X]$ is an extended ideal of the ring $$R''[X]=(K[X_1,\dots, X_{d-1}]_{(X_1,\dots, X_{d-1})}[X])[X_d],$$ and $h\in R''[X]$ is monic polynomial in $X_d$. Therefore, applying a relative version of a monic inversion principle as given in Corollary \ref{relative monic inversiion for symplectic matrix}, we obtain the following. $$\theta\in \E({n},R''[X], I''[X])$$This concludes the proof.\qed
	
	\rmk If Matsumoto's lemma \ref{symplectic Matsumoto} holds for $n=2$, then, following the argument given in Theorem \ref{relative vorst theorem}, one can prove that
$	\mathrm{Sp}_4(R[X],I[X])=\mathrm{ESp}_4(R[X],I[X])$,
	where $R$ and $I$ are as in the theorem. However, we do not know whether Matsumoto's lemma holds for $n=2$.
	
We conclude this subsection recalling the following relative Local-Global Principle from \cite[Theorem 4.5]{acr}.

\bt \label{relative local global}

Let $n\ge 3$, and let $\alpha(X) \in \operatorname{S}(n, R[X], I [X])$, with $\alpha(0) = I_n$. If $\alpha_{\mathfrak{m}}(X) \in \E(n, R_\mathfrak{m}[X], I_\mathfrak{m}[X])$, for
every maximal ideal $\mathfrak{m}$ of $R$, then $\alpha(X) \in \E(n, R[X], I [X])$.
\et

	\bc
		Let $R$ be a smooth affine $k$-algebra of dimension $d$, where $k$ is an infinite field. Let $I$ be an ideal of $R$ such that $R/I$ is also smooth. Let $\alpha(X)\in \mathrm{S}(n,R[X],I[X])$ be such that $\alpha(0)=\text{Id}$, where in the linear case $n\ge 3$, and in the symplectic case $n\ge 6$. Then $$\alpha(X)\in \E({n},R[X],I[X]).$$

	\ec	
		
	\proof It follows from a Local–Global Principle \ref{relative local global} that it is enough to prove $\alpha_{\mm}(X)\in \E({n},R_{\mm}[X], I_{\mm}[X])$ for all maximal ideals $\mm$ of $R$. However, this follows from Theorem \ref{relative vorst theorem}. This concludes the proof.\qed

\section{Injective stability for relative $\mathrm{K_1}$-groups}\label{affine algebra}

This section is devoted to study the relative injective stability for linear $\rm K_1$-groups of affine algebras over various base fields. We begin with a criterion for improved injective stability for relative linear and symplectic $\mathrm{K}_1$-groups over affine algebras. Before that, we recall the following definition.

\bd
A matrix $\sigma \in \Sp_{2n}(R,I)$ is said to be symplectic homotopic to the identity relative to $I$ if there exists $\sigma(X) \in \Sp_{2n}(R[X], I[X])$ such that $\sigma(0) = \mathrm{I}_{2n}$ and $\sigma(1) = \sigma$. If $I = R$ and the above condition holds, we simply say that $\sigma$ is symplectically homotopic to the identity. When it is clear from the context, we may omit the term ``symplectic'' and say that $\sigma$ is homotopic to the identity relative to $I$.
\ed

\bp
\label{cis}
Let $R$ be an affine algebra  over a field $k$ and $I\subset R$ be an ideal. Let $i\in \{1,2\}$ be such that $i=1$ in the linear case and $i=2$ in the symplectic case. Let $\alpha\in \mathrm{S}({n},R,I) \cap \E({n+i},R,I)$. If \begin{equation*}\label{H1}\tag{H}
\Um_{n+i}(R[X], \ld X^2-X \rd I[X])= e_1 \mathrm{S}({n+i},R[X], \ld X^2-X \rd I[X]),\end{equation*} then $\alpha$ is homotopic to identity relative to $I$. Further, assume that (a) $k$ is an infinite field; (b) $n\ge 3$ in the linear case, and $n\ge 6$ in the symplectic case; and (c) both $R$ and $R/I$ are regular, then $\alpha\in \E(n,R,I)$, i.e., in particular $$\mathrm{S}({n},R,I) \cap \E({n+i},R,I)=\E({n},R,I),$$where $i=1$ in the linear case and $i=2$ in the symplectic case. 
\ep

\proof Let $\alpha \in \mathrm{S}({n},R,I)\cap \E({n+i},R,I)$. Then we have $\mathrm{I}_i\perp \alpha \in \E({n+i},R,I)$, where $\mathrm{I}_i$ is the identity matrix of order $i$. Hence there exists $\gamma(X)\in \E({n+i},R[X], I[X])$ such that $\gamma(0)= \I_{n+i}$ and $\gamma(1)= \I_i\perp \alpha$. We consider the unimodular row $$v(X)= e_1\gamma(X)\in \Um_{n+i}(R[X], \ld X^2-X \rd I[X]).$$ From hypothesis, we get that $v(X)\in e_1 \mathrm{S}({n+i},R[X], \ld X^2-X \rd I[X])$. Therefore, we have $v(X)= e_1 \gamma(X)= e_1 \beta(X)$ for some $\beta(X)\in \mathrm{S}({n+i},R[X], \ld X^2-X \rd I[X])$. Then the matrix $\gamma(X) \beta(X)^{-1}$ takes the following form:
\begin{compactenum}[]
	\item 	$\begin{pmatrix}
		1 &0\\
		*& \alpha(X)
	\end{pmatrix}, \text{ for some }\alpha(X)\in \SL_n(R[X]) \text{ in the linear case or}$ 
	\item $\begin{pmatrix}
		1&0&0\\
		*&1&*\\
		*&0& \alpha(X)
	\end{pmatrix}, \text{ for some }\alpha(X)\in \Sp_{2m}(R[X])  \text{ in the symplectic case, where } n=2m.$
\end{compactenum}
Moreover, since $\beta(X)\in \mathrm{S}({n+i},R[X], \ld X^2-X \rd I[X])\subset \mathrm{S}({n+i},R[X], I[X])$, it follows that $\alpha(X) \in \mathrm{S}({n},R[X], I[X])$.  Further, one may observe that $\alpha(0)= \I_{n}$ and $\alpha(1)= \alpha$. Hence $\alpha(X)$ is the required homotopy matrix.

Now we assume (a), (b), and (c). Let $\mathfrak{p}$ be a prime ideal of $R$. Then we have $\alpha_{\mathfrak{p}}(X)\in \mathrm{S}({n},R_{\mathfrak{p}}[X], I_{\mathfrak{p}}[X])$. Then by Theorem \ref{relative vorst theorem}, we have $\alpha_{\mathfrak{p}}(X)\in \E_{d+1}(R_{\mathfrak{p}}[X], I_{\mathfrak{p}}[X])$. This holds for every prime ideal $\mathfrak{p}$ of $R$. Therefore, applying relative Local-Global principle \cite[Theorem 4.5]{acr} we obtain that $\alpha(X)\in \E({n},R[X], I[X])$. Therefore, we have $\alpha= \alpha(1) \in \E({n},R, I)$. This concludes the proof.\qed

\rmk It follows from Lemma \ref{principal implies general} that if $\Um_{n+i}(R[X], \langle f \rangle)= e_1 \mathrm{S}(n+i, R[X], \langle f \rangle)$ for all $f \in R[X]$, then hypothesis (\ref{H1}) holds.

%

    Now we present the main theorem of this section.
	
	\bt\label{Relative Stability for affine}
	Let $R$ be a regular affine algebra over an infinite field $k$ of dimension $d\ge 2$. Let $I\subset R$ be an ideal such that $R/I$ is regular. Then we have the following.
	
	\begin{compactenum}[\quad\quad\quad(1)]
		\item If for any prime $p\le d$ either $p\neq \rm{char}(k)$, and $\rm{c.d._p} (k)\leq 1$; or $p=\rm{char}(k)$ and $k$ is perfect, then $\mathrm{SL}_{d+1}(R,I)\cap \E_{d+2}(R,I)= \E_{d+1}(R,I)$.
		
		\item If $k=\mathbb R$ and the set of all real points $X(\mathbb R)=\emptyset$, where $X=\Spec(R)$, then $\mathrm{SL}_{d+1}(R,I)\cap \E_{d+2}(R,I)= \E_{d+1}(R,I)$.
		\item If $k=\ol k$ and $d\ge 4$ such that $\frac{1}{d!}\in k$, then $\mathrm{SL}_{d}(R,I)\cap \E_{d+1}(R,I)= \E_{d}(R,I)$.
	\end{compactenum}
	
	\et
	
	\proof At each case it is enough to show that the hypothesis (\ref{H1}) of Proposition \ref{cis} is satisfied. We only show this. 
	
	\begin{compactenum}[(1)]
		\item If for any prime $p\le d$ either $p\neq \rm{char}(k)$, and $\rm{c.d._p} (k)\leq 1$; or $p=\rm{char}(k)$ and $k$ is perfect, then this follows from Lemma \ref{Rao-vdK}.
		\item  If $k=\mathbb R$ such that $X(\mathbb R)=\emptyset$, then this follows from Lemmas \ref{Polynomial of P is P} and \ref{completion over real affine relative}.
		\item If $k=\ol k$ and $d\ge 4$ such that $\frac{1}{d!}\in k$, then this follows from Proposition \ref{relative coordinate power over alg closed}. \qed
		
	\end{compactenum}

\section{Relative elementary symplectic Witt groups}\label{relative elementary symplectic Witt groups}

The purpose of this section is to establish a relative version of Karoubi periodicity sequence (cf. \cite[Section 2.1]{Syed2024}). To this end, we introduce the relative elementary symplectic Witt groups. We begin by recalling some basic facts concerning the Pfaffian of an alternating matrix; for its definition, we refer to \cite[Chapter XV, \S 9]{Lang2002}. We also recall below some well-known facts, whose proofs are classical in nature.
 
 \rmk 
 \label{properties of Pfaffian}
Let $\alpha \in \M_{2n}(R)$ be an alternating matrix. We denote the Pfaffian of $\alpha$ by $\operatorname{Pf}(\alpha)$. The Pfaffian of an alternating matrix satisfies the following properties:
 
 \begin{compactenum}[\quad\quad(1)]
 	\item $\operatorname{Pf}(\alpha\perp \beta)=\operatorname{Pf}(\alpha) \operatorname{Pf}(\beta)$,
 	\item $\operatorname{Pf}(\alpha^T)=(-1)^n \operatorname{Pf}(\alpha)$,
 	\item $\operatorname{Pf}(\alpha)^2=\operatorname{det}(\alpha)$,
 	\item $\operatorname{Pf}(\beta^T \alpha \beta)=\operatorname{det}(\beta) \operatorname{Pf}(\alpha)$, for any $\beta\in \M_{2n}(R)$,
 	\item $\operatorname{Pf}(\chi_n)=1$.
 \end{compactenum}

 
 
 
 
 
In the following we define the relative elementary symplectic Witt groups. 
 \bd\label{relative witt gr}
 Let $I$ be an ideal of $R$. Let $\A'_{2n}(R)$ be the set of all alternating matrices in $\GL_{2n}(R)$, and let $\A_{2n}(R)$ be the set of all alternating  matrices in $\GL_{2n}(R)$ with Pfaffian $1$. We define $\A'_{2n}(R,I):=\{ \alpha \in \A'_{2n}(R): \alpha- \chi_n \in \M_{2n}(I) \}$. For $m<n$, the set $\A'_{2m}(R,I)$ can be embedded into $\A'_{2n}(R,I)$ by the inclusion map $i_{m,n}:\A'_{2m}(R,I)\hookrightarrow \A'_{2n}(R,I)$ given by $ \alpha\mapsto \alpha\perp\chi_{n-m} $. We define $ \A'(R,I):=\varinjlim \A'_{2n}(R,I)$.
 One can also define the set $\A_{2n}(R,I)$ as the subset of $\A'_{2n}(R,I)$ consisting of alternating  matrices of Pfaffian $1$. We put $\A(R,I):= \varinjlim \A_{2n}(R,I)$. There is an equivalence relation $\sim_I$ on $\A'(R,I)$ and on $\A(R,I)$ as follows:
 
 Let $ \alpha \in \A'_{2m}(R,I)$ (or $\alpha\in \A_{2m}(R,I)$) and $\beta \in \A'_{2n}(R,I)$ (or $\beta\in \A_{2n}(R,I)$). Then $\alpha \sim_I \beta$ if and only if there exists $t\in \mathbb{N}$ and $\epsilon \in \E_{2(m+n+t)}(R,I)$ such that
 \begin{center}
 	$\alpha \perp \chi_{n+t}=\epsilon^T(\beta \perp \chi_{m+t})\epsilon$.
 \end{center} 
 We denote the set of all equivalence classes $\A'(R,I)/\sim_I$ by $W^{\prime}_E(R,I)$ and the set of all equivalence classes $\A(R,I)$ by $W_E(R,I)$. Whenever $I = R$, we denote $W^{\prime}_E(R,R)$ and $W_E(R,R)$ by $W^{\prime}_E(R)$ and $W_E(R)$, respectively.
 \ed

\bd Recall that, in Definition \ref{general lift} we defined the lift of a matrix in $M_n(I)$, where $I \subset R$ is an ideal. Let $\alpha \in \A'_{2n}(R,I)$. Then $\alpha = \chi_n + \gamma$ for some $\gamma \in M_n(I)$. We define the lift $\alpha_L \in M_n(R \oplus I)$ of $\alpha$ by $\alpha_L := \chi_n + \gamma_L$, where $\chi_n$ is the canonical lift of $\chi_n$. One can check that $\alpha_L \in \A'_{2n}(R \oplus I, 0 \oplus I)$. Moreover, if $\alpha \in \A_{2n}(R,I)$, then $\alpha_L \in \A_{2n}(R \oplus I, 0 \oplus I)$.

\ed
 \remark
\label{inverse in Witt group}
 The matrix $\sigma_{n}$ is a symmetric matrix with $\sigma_n^{-1}=\sigma_n$. In addition, we have $\sigma_n \chi_n \sigma_n= \chi_n^{-1}$. It follows from  \cite[Chapter I, \S 3]{SV} that in   $W_E(R)$ we have   $[\alpha]^{-1}= [\sigma_n \alpha^{-1} \sigma_n]$ for any $\alpha\in \A_n(R,R)$.

 \bl
 \label{tilde operation}
 Let $\alpha\in \A'_{2n}(R,I)$ and $\beta\in \A'_{2m}(R,I)$. Then we have the following.
 \begin{compactenum}[\quad\quad\quad (a)]
     \item $({\alpha\perp \beta})_L= {\alpha}_L\perp {\beta}_L$, and
     \item $\sigma_{n} \alpha^{-1} \sigma_n\in\A'_{2n}(R,I)$ and $({\sigma_{n} \alpha^{-1} \sigma_n})_L= \sigma_{n} ({\alpha}_L)^{-1} \sigma_{n}$.
 
 \end{compactenum} 
 \el
 
 \proof Since $\alpha\equiv \chi_n$ and $\beta \equiv \chi_m\pmod{I}$, it follows that $\alpha \perp \beta \in \A'_{2(m+n)}(R,I)$. Let $\alpha= \chi_{n}+\gamma$ and $\beta=\chi_m+\tau$, where $\gamma\in M_{2n}(I)$ and $\tau\in M_{2m}(I)$. Then we observe the following.
 \[
 \begin{aligned}
 (\alpha\perp \beta)_L&=\chi_{(n+m)}+ (\gamma\perp \tau)_L\\
 	&= \chi_{(n+m)}+ \gamma_L\perp \tau_L\\
    &=(\chi_{n}+ \gamma_L)\perp (\chi_m+\tau_L)\\
    &=\alpha_L\perp \beta_L
 \end{aligned}
 \]
This proves (a). To prove (b) we note that since $\alpha\equiv \chi_n \pmod{I}$ and $\sigma_n \chi_n \sigma_n=\chi_n^{-1}$, one may have the following. $$\sigma_n \alpha^{-1} \sigma_n\equiv \chi_n\pmod{I}$$Implying that $\sigma_n \alpha^{-1} \sigma_n\in \A'_{2n}(R,I)$. Let the map $v : D(R, I) \to R \oplus I$, sending $(x, y)\mapsto (x, y - x)$ induce the ring homomorphism $v_*:\M_n(D(R, I))\to \M_n(R\oplus I)$. In addition, one can check that $v_*(\chi_n,\alpha)=\alpha_L$, and $v_*(\sigma_n,\sigma_n))= \sigma_n$. Since $(\chi_n, \alpha)(\chi_n^{-1}, \alpha^{-1})=(\I_n, \I_n)$ in $\GL_{2n}(D(R,I))$, we observe that $$({\alpha}_L)^{-1}= v_*(\chi_n^{-1}, \alpha^{-1}) \text{ in } \GL_{2n}(R\oplus I).$$Then we have the following.
 \[
 \begin{aligned}
 ({\sigma_{n} \alpha^{-1} \sigma_n})_L
&=v_*(\chi_n, \sigma_n \alpha^{-1} \sigma_{n})\\
&=v_*(\sigma_n \chi_n^{-1} \sigma_n, \sigma_n \alpha^{-1} \sigma_{n})\\
 	&=v_*(\sigma_n, \sigma_n) v_* (\chi_n^{-1}, \alpha^{-1}) v_*(\sigma_{n}, \sigma_{n})\\
 	&=\sigma_n (\alpha_L)^{-1} \sigma_n
 \end{aligned}
 \]
This concludes the proof.\qed 
 
In the next result, we prove that $(W'_E(R,I),\perp)$ is an abelian 
group, which we call the relative elementary symplectic Witt group of 
$R$ with respect to the ideal $I$.

 \bp
 \label{W_E'(R,I) is abelian}
 Let $R$ be a ring and $I$ be an ideal of $R$. Then the set $W'_E(R,I)$ is an abelian group with respect to the operation $\perp$.
 \ep
 
\proof First, we observe that associativity of $\perp$ follows directly from the Definition \ref{relative witt gr}. Therefore, it suffices to show that for any $[\alpha],[\beta]\in 
W'_E(R,I)$ we have (i) $[\chi_1]$ is the identity element of $W'_E(R,I)$, (ii)  $[\alpha]$ has an inverse, and (iii) $[\alpha\perp \beta]=[\beta\perp \alpha]$. To establish this, we frequently use the fact that $W'_E(R\oplus I)$ is an abelian group. 

We first note that it follows from the Definition \ref{relative witt gr} that \begin{equation}\label{left identity}
    [\alpha \perp \chi_1]=[\alpha] \text{ in } W'_E(R,I).
\end{equation}
Since $\alpha \equiv \chi_n \pmod I$, using Remark \ref{inverse in Witt group} we have $$\sigma_n\alpha^{-1} \sigma_n\equiv \sigma_n \chi_n^{-1} \sigma_n\equiv\chi_n\pmod I,$$and hence $\sigma_n \alpha^{-1} \sigma_n\in \A'_{2n}(R,I)$. Now, we consider the matrix $$\delta=\alpha\perp \sigma_n \alpha^{-1} \sigma_n\in \A'_{4n}(R,I).$$ Let ${\delta}_L\in \A'_{4n}(R\oplus I)$ be the lift of $\delta$. It follows from Lemma \ref{tilde operation} that ${\delta}_L={\alpha}_L\perp \sigma_n{\alpha}^{-1}_L \sigma_n$. Hence again by Remark \ref{inverse in Witt group}, we get that $[\delta_L]=[\chi_1]$ in $W'_E(R\oplus I)$. Meaning, there exists $t\in \mathbb{N}$ and $\varepsilon\in \E_{4n+2t}(R\oplus I)$ such that $$\varepsilon^T ({\delta}_L\perp \chi_{t})\varepsilon =\chi_{2n+t}.$$ Let ‘bar' denote going modulo $0\oplus I$. Since ${\delta}_L\in \A'_{4n}(R\oplus I)$, going modulo $0\oplus I$, we get the following.$$\ol\varepsilon^T\chi_{2n+t}\ol{\varepsilon}=\chi_{2n+t}$$ Since $0\oplus I$ is a split ideal of $R\oplus I$, using Lemma \ref{K_2-lemma}, without loss of generality we may replace $\varepsilon$ by $\varepsilon \overline{\varepsilon}^{-1}$ and assume that $\varepsilon\in \E_{4n+2t}(R\oplus I,0\oplus I)$. We consider the projection map $\pi:R\oplus I\to R$ given by $(r,i)\mapsto r+i$, and let $\varepsilon_1=\pi(\varepsilon)\in \E_{4n+2t}(R, I)$. Under the map $\pi$, we obtain that $$\varepsilon_1^T(\alpha\perp \sigma_n\alpha^{-1} \sigma_n\perp \chi_{t})\varepsilon_1=\chi_{2n+t}.$$ This proves that \begin{equation}\label{right inverse}
    [\alpha\perp\sigma_n\alpha^{-1}\sigma_n]=[\chi_1]\text{ in }W'_E(R,I). 
\end{equation}Using a similar argument, one can also prove that \begin{equation}\label{left inverse}
    [\sigma_n\alpha^{-1}\sigma_n\perp\alpha]=[\chi_1]\text{ in }W'_E(R,I).
\end{equation}

Combining \ref{left identity}, \ref{right inverse} and \ref{left inverse} we obtain that $$[\chi_1\perp \alpha]=[(\alpha\perp\sigma_n\alpha^{-1}\sigma_n)\perp \alpha]=[\alpha\perp(\sigma_n\alpha^{-1}\sigma_n\perp \alpha)]=[\alpha\perp\chi_1]=[\alpha]\text{ in }W'_E(R,I).$$This concludes that $W'_E(R,I)$ is a group.

Now we prove that $W'_E(R,I)$ is an abelian group. Let us choose $[\alpha],[\beta]\in W'_E(R,I)$. Then $\alpha\in \A'_{2n}(R,I)$ and $\beta\in \A'_{2m}(R,I)$ for some $n,m\in \mathbb N$. Let us define\begin{equation}\label{equ:1}
    \gamma:=\alpha \perp \beta \perp \sigma_{n+m}( \beta^{-1}\perp \alpha^{-1})\sigma_{n+m}\in \A'_{4n+4m}(R, I), 
\end{equation}then $[\gamma] \in W'_E(R,I)$. Let $\alpha_L\in \A'_{2n}(R\oplus I)$ and $\beta_L\in \A'_{2m}(R\oplus I)$ be the lifts of $\alpha$ and $\beta$ respectively. One may note that, in $W'_E(R\oplus I)$, we have $$[\gamma_L]=[\alpha_L \perp \beta_L\perp \sigma_{n+m}(\beta_L^{-1}\perp \alpha_L^{-1})\sigma_{n+m}]=[\chi_1],$$ where ${\gamma}_L\in \A'_{4n+4m}(R\oplus I)$ is the lift of $\gamma$. Hence there exist $s\in \mathbb{N}$ and $\varphi\in \E
 _{j}(R\oplus I)$} such that $$\varphi^T(\gamma_L\perp \chi_{s})\varphi=\chi_{2n+2m+s},$$ where $j=4n+4m+2s$. Furthermore, as explained earlier, without loss of generality we may assume that $\varphi\in \E_{j}(R\oplus I,0\oplus I)$. Now applying the homomorphism $\pi$, we have $$\varphi_1^T(\gamma \perp \chi_{s})\varphi_1=\chi_{2n+2m+s},$$ where $\varphi_1=\pi(\varphi)\in \E_{j}(R,I)$. Hence $[\gamma]=[\chi_1]$ in $W'_E(R,I)$, i.e., $$[\alpha\perp\beta]=[\beta\perp \alpha] \text{ in } W'_E(R,I).$$This concludes the proof.\qed

 \bc
 \label{W_E(R,I) is abelian}
 Let $R$ be a ring and $I$ be an ideal of $R$. Then the set $W_E(R,I)$ is an abelian group with respect to the operation $[\alpha] [\beta]=[\alpha \perp \beta]$ 
 \ec
 
 \proof Since $W_E(R,I) \subset W_E'(R,I)$, it is enough to show that if $[\alpha]$ and $[\beta]\in W_E(R,I)$, then $[\alpha\perp \beta]\in W_E(R,I)$ and $ [\alpha]^{-1}\in W_E(R,I)$.

 Let $[\alpha], [\beta]\in W_E(R,I)$, then we have $\operatorname{Pf}(\alpha)=\operatorname{Pf}(\beta)=1$. By Remark \ref{properties of Pfaffian}, we have $\operatorname{Pf}(\alpha \perp \beta)=\operatorname{Pf}(\alpha) \operatorname{Pf}(\beta)=1$. Hence $[\alpha \perp \beta]\in W_E(R,I)$. 
 
 Now for any $\alpha\in \A'_{2n}(R,I)$, by Proposition \ref{W_E'(R,I) is abelian}, we have $[\alpha]^{-1}= [\sigma_n \alpha^{-1} \sigma_{n}]$ in $W'_E(R,I)$. Since $\operatorname{Pf}(\alpha)=1$, applying Remark \ref{properties of Pfaffian}, we get that $\operatorname{det}(\alpha^{-1})=1$. Then we have the following:
\begin{align*}
	\operatorname{Pf}(\alpha^{-1})&=(-1)^n \operatorname{Pf}((\alpha^{-1})^T)\\
	&=(-1)^n \operatorname{det}(\alpha^{-1}) \operatorname{Pf}(\alpha)\\
	&=(-1)^n 
\end{align*}
 Once again, applying Remark \ref{properties of Pfaffian}, we obtain that $\operatorname{Pf}(\sigma_n \alpha^{-1} \sigma_n)= \operatorname{det}(\sigma_n) \operatorname{Pf}(\alpha^{-1})=1$. Implying that $[\alpha]^{-1}\in W_E(R,I)$. Hence, being a subgroup of $W_E'(R,I)$, the group $W_E(R,I)$ is an abelian group. This concludes the proof. \qed 

 For any ideal $I\subset R$, let $C$ be the kernel of the group homomorphism  $R^*\to (R/I)^*$ induced by the canonical projection $\pi:R\to R/I$. For any $\alpha\in \A'(R,I)$ we have $\operatorname{Pf}(\alpha)-1\in I$. Therefore, the Pfaffian map $\operatorname{Pf}: \A'(R,I)\to R^*$ induces a map $\operatorname{Pf}:W_E'(R,I)\to C$.
 
 \bl
 \label{spit exact sequence}
The following sequence is split exact

 \[
 1\rightarrow W_E(R,I)\xrightarrow{j} W_E'(R,I)\xrightarrow{\operatorname{Pf}} C\rightarrow 1,
 \]
 where $j$ is the natural inclusion map. \el
 
 \proof First, we note that the map $j$ is injective by Corollary \ref{W_E(R,I) is abelian}. Further, the sequence is exact in the middle, as the kernel of $\operatorname{Pf}$ is precisely $W_E(R,I)$. Hence to prove the sequence is exact, we only need to show that $\operatorname{Pf}$ is surjective. However, for any $a\in C$, the matrix $\alpha_a:=\begin{pmatrix}
 	0&a\\-a&0
 \end{pmatrix}\in \A'_{2}(R,I)$ and $\operatorname{Pf}(\alpha_a)=a$. This proves that the map $\operatorname{Pf}$ is surjective. 
 
 Now we define $u: C\to W_E(R,I)$ by $u(a)= [\alpha_a]$. If $u$ is a group homomorphism, then $u$ is a split homomorphism, as $ \operatorname{Pf}\circ u(a)= a$ for all $a\in C$. We show this in the following.
 
 Let $a,b\in C$. Then $u(a)=[\alpha_a]$ and $u(b)=[\alpha_b]$ and $u(ab)= [\alpha_{ab}]$. Therefore, we are done if we show that $[\alpha_{ab}]=[\alpha_a\perp \alpha_b]$. Since $b\in C$, we have $\gamma_b:=\begin{pmatrix}
 b&0\\0&1
 \end{pmatrix}\in \GL_2(R,I)$. Applying Whitehead lemma \cite[Corollary 2.3]{SV}, it follows that $\gamma_b^{-1}\perp \gamma_b\in \E_4(R,I)$. Hence we obtain the following:
 \begin{align*}
 	(\gamma_b^{-1}\perp \gamma_b)^T(\alpha_{ab}\perp \chi_1) (\gamma_b^{-1}\perp \gamma_b)&=\alpha_a \perp \alpha_b
 \end{align*}
 Therefore $u(ab)=[\alpha_{ab}]=[\alpha_{ab}\perp \chi_1]=[\alpha_a \perp \alpha_b]=u(a)u(b)$. This completes the proof.\qed
 
    We now observe that, for any ideal $I \subset R$, there is a natural map $i:W_E(R,I) \to W_E(R)$. In the next theorem we show that there exists the following homology sequence $$W_E(R,I)\xrightarrow{i} W_E(R)\xrightarrow{\pi_*} W_E(R/I)$$for Witt groups, where $\pi_*$ is induced from the canonical projection $\pi:R\to R/I$. Furthermore, if $I$ is a split ideal of $R$, i.e., when $\pi: R \to R/I$ is a retract, then the homology sequence reduces to the following short exact sequence.$$1 \rightarrow W_E(R,I)\xrightarrow{i} W_E(R)\xrightarrow{\pi_*} W_E(R/I)\rightarrow 1$$
    
 \bt
 \label{split implies injective}
 Let $R$ be a ring  and $I$ be a split ideal of $R$. Then we have the following short exact sequence.
 \[
 1 \rightarrow W_E(R,I)\xrightarrow{i} W_E(R)\xrightarrow{\pi_*} W_E(R/I)\rightarrow 1
 \]

 \et
 
 \proof First, we show that $i$ is injective. Let $[\alpha]\in W_E(R,I)$ be such that $i([\alpha])=[\chi_1]$. We may assume that $\alpha\in \A_{2n}(R,I)$. Since  $[\alpha]=[\chi_1]$ in $W_E(R)$, there exist $t\in \mathbb{N}$ and $\varepsilon\in \E_{2(n+t)}(R)$ such that
 \[
 \varepsilon^T(\alpha\perp \chi_{t})\varepsilon=\chi_{n+t}.
 \]
 Let `bar' denote going modulo $I$. Then we have $$\bar{\varepsilon}^T \chi_{n+t} \bar{\varepsilon}=\chi_{n+t}.$$ Since $I$ is a split ideal there exists a map $s:R/I\to R$ such that $\pi\circ s=id$. Then one may observe that $s(\bar{\varepsilon})\in \E_{2(n+t)}(R)$. Further, since $I$ is a split ideal, by Lemma \ref{K_2-lemma} we get that $$\varepsilon s(\bar{\varepsilon})^{-1}\in \SL_{2n+2t}(R,I)\cap \E_{2n+2t}(R)= \E_{2n+2t}(R,I).$$ Then we have the following.
 \[
 (\varepsilon s(\bar{\varepsilon})^{-1})^T (\alpha \perp \chi_{t})(\varepsilon s(\bar{\varepsilon})^{-1})=\chi_{n+t}
 \]
 This shows that $[\alpha]=[\chi_1]$ in $W_E(R,I)$. Hence the map $i$ is injective.  

 Next we show that $\ker(\pi_*)={\rm im}(i)$. Let $[\alpha]\in W_E(R,I)$. We may assume that $\alpha\in \A_{2n}(R,I)$. Then we have $\pi_*(i([\alpha]))= \pi_*([\alpha])=[\chi_n]=[\chi_1]$. This shows that ${\rm im}(i)\subset \ker{\pi_*}$. Now let us choose $[\beta]\in W_E(R)$ such that $\pi_*([\beta])=[\chi_1]$. Then $\beta \in \A_{2m}(R)$ for some $m$. There exist $u\in \mathbb{N}$ and $\theta\in \E_{2m+2u}(R/I)$ such that $$\theta^T(\bar{\beta}\perp \chi_{u} )\theta= \chi_{m+u}.$$ We observe that $s(\theta)\in \E_{2m+2u}(R)$ and hence $s(\theta)^T (\beta\perp \chi_{u})s(\theta)\in \A_{2m+2u}(R)$. Therefore, we get that $$s(\theta)^T (\beta\perp \chi_{u})s(\theta)\equiv \chi_{m+u}\mod (I),$$ i.e., $s(\theta)^T (\beta\perp \chi_{u})s(\theta)\in \A_{2m+2u}(R,I)$. Thus in the group $W_E(R)$ we obtain the following.
 \[
 i([s(\theta)^T (\beta\perp \chi_{u})s(\theta)])=[s(\theta)^T (\beta\perp \chi_{u})s(\theta)]=[\beta\perp \chi_{u}]=[\beta]
 \] 	
 Hence we prove that $\ker{\pi_*}= {\rm im}(i)$. 
 
 In the remaining part, we show that the map $\pi_*$ is surjective. Let us choose $[\gamma]\in W_E(R/I)$. Then $[s(\gamma)]\in W_E(R)$ and $\pi_*([s(\gamma)])= [\pi(s(\gamma))]=[\gamma]$. Hence $\pi_*$ is surjective. This concludes the proof.\qed
 

We now proceed to establish a relative version of Karoubi's periodicity 
sequence. Note that, as in the absolute case, one can define a homomorphism 
$H\colon \mathrm{K}_1(R,I)\to W'_E(R,I)$ sending $[\alpha]\mapsto  
[\alpha^T \chi_n \alpha]$ for some $\alpha \in \GL_{2n}(R,I)$. In the 
following proposition we identify the kernel of $H$ with respect to an arbitrary alternating matrix. Before that, we recall the following definition.

\bd\label{symplectic matrix w.r.t. a form}
Let $ I\subset R$ be an ideal and let $\varphi\in Alt'(R,I)$. The group $\Sp_{\varphi}(R,I):=\{\alpha \in \GL_{2n}(R,I) : \alpha^T\varphi\alpha = \varphi\}$ is called symplectic group with respect to $\varphi$. Any element $\alpha\in \Sp_{\varphi}(R,I)$ is called a relative symplectic matrix with respect to $\varphi$.
\ed

The following result is the relative version of \cite[Lemma 2.1]{Syed2024}. 

 \bp
 \label{kernel of H}
Let $I\subset R$ be an ideal and $\gamma\in \A_{2n}(R,I)$. If $\alpha\in \GL_{2n}(R,I)$ such that $[\alpha]\in \ker{H}$, then there exist $m\in \mathbb{N}$ and $\alpha'\in \SL_{2n+2m}(R,I)$ such that (1) $[\alpha]=[\alpha']$ in $\rm K_1(R,I)$, and (2) $\alpha'\in \Sp_{\varphi}(R,I)$, where $\varphi=\chi_m\perp\gamma $ . 
 \ep
 
 \proof Since $H([\alpha])=[\chi_1]$, we have $[\alpha^T \chi_n \alpha]=[\chi_1]$ in $W_E(R,I)$. Hence there exists $s\in \mathbb{N}$ and $\varepsilon \in \E_{2n+2s}(R,I)$ such that the following hold. \begin{equation}\label{eqn}
 \varepsilon^T(\alpha^T \chi_n \alpha\perp \chi_{s})\varepsilon = \chi_{s+n}\end{equation} 
We set $m=n+s$, and $\alpha'=(\alpha \perp \I_{2m})(\varepsilon\perp \I_{2n})$. Then clearly $\alpha'\in \SL_{2m+2n}(R,I)$ and $[\alpha]=[\alpha']$ in $\mathrm{K_1}(R,I)$. Then it only remains to show that $\alpha'\in \Sp_{\varphi}(R,I)$. To verify this, note that \eqref{eqn} implies the following.
\begin{align*}
    \epsilon^T(\alpha^T\perp \I_{2s})\chi_m(\alpha\perp I_{2s})\epsilon\perp\gamma=\varphi
\end{align*}
However, this is nothing but
\begin{align*}
    (\alpha')^T\varphi\alpha'=\varphi.
\end{align*}Implying that $\alpha'\in \Sp_{\varphi}(R,I)$. This concludes the proof.\qed

Note that the forgetful homomorphism $\Sp_{2n}(R,I)\to \GL_{2n}(R,I)$ induces a natural homomorphism $ f:\KSp(R,I)\to \mathrm{K_1}(R,I)$.
Throughout the remainder of this article, we use the notation $f$ to denote this map, irrespective of the ideal $I$. The following result is known as relative Karoubi periodicity sequence. The absolute case of the following can be found \cite[Proposition 3.2]{FRS}.
 \bl
 \label{Relative Karoubi periodicity sequece}
Let $R$ be a ring and $I$ be an ideal of $R$. Then the following sequence is exact.
 \[
 \mathrm{K_1Sp}(R,I) \xrightarrow{f} \mathrm{K_1}(R,I) 
 \xrightarrow{H} W'_E(R,I)
 \]

 \el
 
 \proof  Let $[\alpha]\in \mathrm{K_1}(R,I)$ be such that $H([\alpha])=[\chi_1]$. Then using Proposition \ref{kernel of H} (taking $\gamma=\chi_n$ for some suitable $n\in \mathbb N$) we can find an $\alpha'\in \Sp_{2m}(R,I)$ such that $[\alpha]=[\alpha']$ in $\mathrm{K_1}(R,I)$. This shows that $[\alpha]\in {\rm im}(f)$.
 
 
 Conversely, let $[\beta]\in \KSp(R,I)$. Then $\beta \in \Sp_{2n}(R,I)$ for some $n\in \mathbb{N}$, that is, $\beta^T \chi_n \beta =\chi_n$. Thus we have $H(f([\beta]))=[\beta^T \chi_n \beta] =[\chi_n]=[\chi_1]$, and hence ${\rm im}(f)\subset \ker(H)$. This completes the proof.\qed

 \bc
 \label{Karoubi periodicity for relative}
 Let $R$ be a ring and $I$ be an ideal of $R$. Then the following sequence is exact.
 \[
 \KSp(R,I) \xrightarrow{f} \SK(R,I) 
 \xrightarrow{H} W_E(R,I)
 \]
 \ec
 
 \proof Since $\operatorname{det}(\alpha)=1$ for $\alpha\in \Sp_{2n}(R,I)$, the image of $f$ lies in $\mathrm{SK_1}(R,I)$. Also by Remark \ref{properties of Pfaffian} it follows that  $\operatorname{Pf}(\alpha^T \chi_n \alpha)= \operatorname{det}(\alpha) \operatorname{Pf}(\chi_n)=1$, for any $\alpha\in \SL_{2n}(R,I)$. Theorfore, the image of $H$ lies in $W_E(R,I)$. This concludes the proof.\qed
 
%


\section{Symplectic completion of relative unimodular rows}\label{symplectic completion of unimodular rows}

In this section, we study the completion problem of a relative unimodular row 
to the first row of a relative symplectic matrix with respect to an 
arbitrary alternating form. We begin with the following preparatory 
lemma, whose absolute case can be found in \cite[Lemma 3.5]{Syed2024}

 \bl 
 \label{key ingredient}
 Let $R$ be a ring and $I$ be an ideal of $R$. Let $\theta_1, \theta_2\in \A'_{2n}(R,I)$ such that $\alpha^T(\chi_1 \perp \theta_1) \alpha= \chi_1\perp \theta_2$ for some $\alpha\in \SL_{2(n+1)}(R,I)$. Further, assume that $$\Um_{2n+2}(R,I)=e_{1}(\Sp_{\varphi}(R,I)\cap \E_{2n+2}(R,I)),$$ where  $\varphi=\chi_1\perp\theta_1$. Then there exists $\beta\in \SL_{2n}(R,I)$ such that $\beta^T \theta_1 \beta=\theta_2$ and $[\alpha]=[\beta]$ in $\mathrm{K_1}(R,I)$. 
 \el
 
 \proof Let us consider the row $e_1\alpha^T$. Then we have $$e_{1}\alpha^T \in \Um_{2n+2}(R,I)= e_{1}(\Sp_{\varphi}(R,I)\cap \E_{2n+2}(R,I)). $$ Meaning, there exists $\gamma^T\in \Sp_{\varphi}(R,I)\cap \E_{2n+2}(R,I)$ such that $e_{1}\alpha^T =e_{1}\gamma^T$. Let us take $\delta=\gamma^{-1} \alpha$. Then we have \begin{equation}\label{A}
 \delta^T(\chi_1\perp\theta_1)\delta=\chi_1\perp \theta_2 \end{equation}
 and $e_{1}\delta^T= e_{1}$. Therefore, one can write \begin{equation}\label{B}
\delta=\begin{pmatrix}
 	  1 &p &x\\
 	0&q&v\\
 	0&u^T&\beta
 \end{pmatrix}, \end{equation} where $\beta\in {\M}_{2n}(R)$, $u,v,x\in {\M}_{1,2n}(I)$, $ p\in I$, and $q\in 1+I$. 
 
 Now we show that $\beta\in \SL_{2n}(R,I)$. It follows from (\ref{A}) that $q=1, v=0$ and $\beta^T \theta_1 \beta=\theta_2$. Therefore, we get that \begin{equation}\label{C}
     \delta\equiv \I_2\perp\beta \mod\E_{2n+2}(R,I).
 \end{equation}Since $\delta\in \SL_{2(n+1)}(R,I)$, it follows that $\beta\in \SL_{2n}(R,I)$. Further, from (\ref{C}) we obtain that $[\alpha]=[\beta]$ in $\mathrm{K_1}(R,I)$. This completes the proof.\qed
    
	
	
	

Recall the definition of $\ESp_{\varphi}(R,I)$ from 
\cite[Definition~3.4]{cr1}, where $\varphi\in \A'_{2n}(R,I)$. It 
follows from the definition that $\ESp_{\varphi}(R,I)\subset 
\E_{2n}(R,I)$, and thus $\ESp_{\varphi}(R,I)$ acts on 
$\Um_{2n}(R,I)$. The orbit space of this action is denoted by 
$\Um_{2n}(R,I)/\ESp_{\varphi}(R,I)$. We recall the following result 
from \cite[Theorem~7.2]{Chattopadhyay2016}.
 
 \bl
 \label{equlaity of orbit spaces}
 Let $R$ be a ring with $R = 2R$ and $I$ be an ideal of $R$. Let $n \ge 2$ and $\varphi\in \A_{2n}(R,I)$. Then are bijective correspondence between the orbit spaces $\Um_{2n}(R, I)/\E_{2n}(R, I)$, $\Um_{2n}(R, I)/\ESp_{2n}(R, I)$ and  $\Um_{2n}(R,I)/\ESp_{\varphi}(R, I)$.  
 \el

 \bc
 \label{transitive action for ESp-alternating}
 Let $R$ be a ring of dimension $d$ with $R=2R$ and $I\subset R$ be an ideal of $R$. Let $\varphi\in \A_{2n}(R,I)$. Then for $2n\ge \max\{3, d+2\}$, we have $\Um_{2n}(R,I)= e_1 \ESp_{\varphi}(R,I)$.
 \ec
 \proof By \cite[Theorem 7.2]{SV}, we have $\Um_{2n}(R,I)= e_1 \E_{2n}(R,I)$ for $2n\ge \max\{3,d+2\}$. Therefore, by Lemma \ref{equlaity of orbit spaces}, we have $\Um_{2n}(R,I)= e_1 \ESp_{\varphi}(R,I)$.

 \bc
  \label{transitive action wrt alternating form}
 Let $R$ be a ring of dimension $d$ with $2R=R$ and $I$ be an ideal of $R$. Let $\varphi\in \A_{2n}(R,I)$. Then for $2n\ge \max\{3, d+2\}$, we have $\Um_{2n}(R,I)= e_1 (\E_{2n}(R,I)\cap \Sp_{\varphi}(R,I))$. 
 \ec
 
 \proof Since $\ESp_{\varphi}(R,I)\subset \E_{2n}(R,I)\cap \Sp_{\varphi}(R,I) \subset \E_{2n}(R,I)$, we have $$e_1\ESp_{\varphi}(R,I)\subset e_1(\E_{2n}(R,I)\cap \Sp_{\varphi}(R,I)) \subset e_1\E_{2n}(R,I).$$ However, it follows from Lemma \ref{equlaity of orbit spaces} and Corollary \ref{transitive action for ESp-alternating} that for all $2n\ge \max\{3,d+2\}$ one can have the following. $$e_1\ESp_{\varphi}(R,I)=e_1\ESp_{2n}(R,I)=\Um_{2n}(R,I)$$ This will imply that $\Um_{2n}(R,I)= e_1 (\E_{2n}(R,I)\cap \Sp_{\varphi}(R,I))$. This concludes the proof.\qed
 
 Proof of the following result is implicit in the proof  of \cite[Theorem 7.2]{Chattopadhyay2016}.
 
  \bl
 \label{relative version of Lemma 5.5 of SV}
 Let $R$ be a ring and $I$ be an ideal of $R$ and $\varphi\in \A_{2n}(R,I)$. Then for $n\geq 2$, we have $e_1 \E_{2n}(R,I)= e_1 \ESp_{\varphi}(R,I)$. 
 \el

 In the following proposition, we give a sufficient condition for the symplectic completion of a relative unimodular row with respect to an arbitrary alternating matrix. This result will be used crucially in the remainder of the section. 
 
 

 \bp
 \label{sufficient condition for alternating completion}
 Let $R$ be a ring and $I$ be an ideal of $R$. Let $\gamma\in \A_{2n}(R,I)$, and $v\in \Um_{2n}(R,I)$, where $n\ge 2$. Suppose that the following conditions hold.
 \begin{enumerate}
     \item The canonical map $\frac{\SL_{2n}(R,I)}{\E_{2n}(R,I)}\to \SK(R,I)$ is injective.
     \item There exists $\alpha\in \SL_{2n}(R,I)$ such that (a) $v=e_1 \alpha$ and (b) $\alpha\in {\rm im}(f)$, where $f:\KSp(R,I)\to \SK(R,I)$.
       \item $\Um_{2n+2i}(R,I) = e_1\big(\E_{2n+2i}(R,I) \cap \Sp_{\tau}(R,I)\big)$ for all $i\geq 1$, where $\tau =\chi_i\perp \gamma $.
 \end{enumerate} Then $v\in e_1 \Sp_{\gamma}(R,I)$.
 \ep
 
 \proof It follows from Corollary \ref{Karoubi periodicity for relative} that ${\rm im}(f)=\ker(H)$, and hence $\alpha \in \ker(H)$. By Proposition \ref{kernel of H}, there exist $m \in \mathbb{N}$ and $\beta \in \SL_{2m+2n}(R,I)$ such that $[\alpha]=[\beta]$ in $\mathrm{K_1}(R,I)$ and $\beta \in \Sp_{\varphi}(R,I)$, where $\varphi=\chi_m\perp \gamma$. From hypothesis (3) we obtain that
\[
\Um_{2n+2m}(R,I) = e_1\big(\E_{2n+2m}(R,I) \cap \Sp_{\varphi}(R,I)\big).
\]
In view of Lemma \ref{key ingredient}, we may further assume without loss of generality that $\beta\in \Sp_{\gamma}(R,I)$. Since, by hypothesis (1), the canonical map
\[
\frac{\SL_{2n}(R,I)}{\E_{2n}(R,I)} \longrightarrow \SK(R,I)
\]
is injective, it follows that $\alpha \beta^{-1} \in \E_{2n}(R,I)$. Now, applying Lemma \ref{relative version of Lemma 5.5 of SV}, we find $\beta' \in \ESp_{\gamma}(R,I)$ such that
\[
e_1 \alpha \beta^{-1} = e_1 \beta'.
\]
Therefore, we obtain the following.
\[
v = e_1 \alpha = e_1 \beta' \beta \in e_1 \Sp_{\gamma}(R,I)
\]
This completes the proof.\qed




Note that there is a canonical map $i:\mathrm{K_1}(R,I)\to \mathrm{K_1}(R)$ obtained by forgetting the relative structure. In the following result, we show that to determine whether an element lies in the image of the map $f:\KSp(R,I)\to \mathrm{K_1}(R,I)$, it suffices to verify this in the absolute case, whenever $I$ is a split ideal.
 
 \bl
 \label{key result-2}
 Let $R$ be a ring and $I$ be a split ideal of $R$. If $[\alpha]\in \mathrm{K_1}(R,I)$ is such that $i([\alpha])\in \mathrm{K_1}(R)$ lies in the image  of $f: \KSp(R)\to \mathrm{K_1}(R)$, then $[\alpha]$ lies in the image of $f: \KSp(R,I)\to \mathrm{K_1}(R,I)$.  
 \el	
 
 \proof We have the following commutative diagram. 
 \[
 \begin{tikzcd}
 	\mathrm{K_1}(R,I) \arrow[r, "i"] \arrow[d, "H"'] & \mathrm{K_1}(R) \arrow[d, "H"] \\
 	W_E(R,I) \arrow[r, "i"'] & W_E(R)
 \end{tikzcd}
 \] 
 Since $i([\alpha])\in {\rm im}(f)$, it folllows from Karoubi periodicity sequence Corollary \ref{Karoubi periodicity for relative} that $H(i([\alpha]))=[\chi_1]$. By the commutativity of the above diagram we obtain that $i(H([\alpha]))=[\chi_1]$. Applying Theorem \ref{split implies injective}, we obtain that the map $i:W_E(R,I)\to W_E(R)$ is injective, in particular we get that $H([\alpha])=[\chi_1]$. Therefore, by Corollary \ref{Karoubi periodicity for relative} it follows that $[\alpha]$ lies in the image of $f$. This concludes the proof. \qed 
 

The following lemma is implicit in the proofs of \cite[Theorems 3.6 and 3.8]{Syed2024}.

 \bl
 \label{completion of coordinate power lies in im(f)}
  Let $R$ be an affine algebra of dimension $d$ over a perfect field $k$ such that $\operatorname{char}(k)\not= 2$. Then we have the following.
 \begin{enumerate}
     \item Assume that $d$ is an odd integer. Let $v= \big(v_1, v_2,\dots, v_d, v_{d+1}^{2 (d!)^2}\big)\in \Um_{d+1}(R)$. Then there exists $\alpha\in \SL_{d+1}(R)$ such that $v= e_1 \alpha$, and $[\alpha]\in \SK(R)$ lies in the image of $f: \KSp(R)\to \SK(R)$.
     \item Assume that $d$ is an even integer. Let $v= \big(v_1, v_2,\dots, v_{d-1}, v_{d}^{d!^2}\big)\in \Um_{d}(R)$. Then there exists $\alpha\in \SL_{d}(R)$ such that $v= e_1 \alpha$, and $[\alpha]\in \SK(R)$ lies in the image of $f: \KSp(R)\to \SK(R)$.
 \end{enumerate}
 \el

The next result is a relative version of the preceding one and is 
crucially used in the proof of the main theorem of this section.

\bp
 \label{Tariq relative}
 Let $R$ be an affine algebra of dimension $d$ over a perfect field $k$ such that $\operatorname{char}(k)\not= 2$.  Let $I$ be an ideal of $R$. Then we have the following.
 \begin{enumerate}
     \item Assume that $d$ is an odd integer. Let $v= \big(v_1, v_2,\dots, v_d, v_{d+1}^{2 (d!)^2}\big)\in \Um_{d+1}(R,I)$. Then there exists $\alpha\in \SL_{d+1}(R,I)$ such that $v= e_1 \alpha$, and $[\alpha]\in \SK(R,I)$ lies in the image of $f: \KSp(R,I)\to \SK(R,I)$.
     \item Assume that $d$ is an even integer. Let $v= \big(v_1, v_2,\dots, v_{d-1}, v_{d}^{d!^2}\big)\in \Um_{d}(R,I)$. Then there exists $\alpha\in \SL_{d}(R,I)$ such that $v= e_1 \alpha$, and $[\alpha]\in \SK(R,I)$ lies in the image of $f: \KSp(R,I)\to \SK(R,I)$.
 \end{enumerate}
 \ep

\proof We give the proof for odd $d$ only, as the case of even $d$ is verbatim. We consider the lift ${v}_L\in \Um_{d+1}(R\oplus I, 0\oplus I)\subset \Um_{d+1}(R\oplus I)$ of $v$. From Lemma \ref{completion of coordinate power lies in im(f)} it follows that there exists $\Gamma\in \SL_{d+1}(R\oplus I)$ such that $v_L=e_1\Gamma$, and $[{\Gamma}]\in \SK(R\oplus I)$ lies in the image of $f: \KSp(R\oplus I)\to \SK(R\oplus I)$. Since $0\oplus I$ is a split ideal of $R\oplus I$, applying Lemma \ref{key result-2}, we have that $[{\Gamma}]\in \SK(R\oplus I, 0\oplus I)$ lies in the image of $f:\KSp(R\oplus I, 0\oplus I)\to \SK(R\oplus I, 0\oplus I)$. Now consider the ring homomorphism $\pi: R\oplus I \to R$, sending $(r,i)\mapsto r+i$. One may note that $\pi(v_L)=v$. Moreover, this map induces the following commutative diagram:
 
 \[
 \begin{tikzcd}
 	\KSp (R\oplus I, 0\oplus I)\arrow[r, "f"] \arrow[d, "\pi_*"'] & \SK(R\oplus I, 0\oplus I) \arrow[d, "\pi_*"] \\
 	\KSp(R,I) \arrow[r, "f"'] & \SK(R,I)
 \end{tikzcd}
 \]
Let us define $\alpha:=\pi(\Gamma)$. Then one may note that $\alpha\in \SL_{d+1}(R,I)$. Now, the fact that $[\alpha]\in \SK(R,I)$ lies in the image of $f: \KSp(R,I)\to \SK(R,I)$ follows from the commutativity of the above diagram. This concludes the proof.\qed

 

We are now ready to prove the main result of this section. 
\bt\label{symplectic completion}
Let $R$ be a regular affine algebra of dimension $d$ over an infinite field $k$ such that $\operatorname{char}(k)\not= 2$. Let $I$ be an ideal of $R$ such that $R/I$ is regular. 

\begin{compactenum}
    \item Let $d\ge 3$ be an odd integer and let $\varphi\in \A_{d+1}(R,I)$. Assume that one of the following holds. 
    \begin{compactenum}
        \item If for any prime $p\leq d$ either
	 $p\neq \mathrm{char}~(k)$ and $\rm{c.d._p} (k)\leq 1$; or 
$p=\rm{char}~(k)$ and $k$ is perfect. 
        \item If $k=\mathbb{R}$ and the set of all real points $X(\mathbb{R})=\emptyset$, where $X=\Spec(R)$.
    \end{compactenum}Then $\Um_{d+1}(R,I)= e_1 \Sp_{\varphi}(R,I)$.
    \item Let $d\ge 4$ be an even integer and let $\varphi\in \A_{d}(R,I)$. If $k$ is an algebraically closed field with $(d-1)!\in k^*$, then $\Um_d(R,I)=e_1\Sp_{\varphi}(R,I)$.
    
\end{compactenum}

\et

 \proof We first give the proof in the case when $d$ is an odd integer and the base field $k$ satisfies condition (a). Note that, since 
$e_1 \Sp_{\varphi}(R,I)\subset \Um_{d+1}(R,I)$, it suffices to show 
that $\Um_{d+1}(R,I)\subset e_1 \Sp_{\varphi}(R,I)$.

Let $v=(v_1,v_2,\dots, v_{d+1})\in \Um_{d+1}(R,I)$. We show that $v$ satisfies all the hypotheses 
of Proposition \ref{sufficient condition for alternating completion}. It follows from Theorem \ref{Relative Stability for affine} that the canonical map$$\frac{\SL_{d+1}(R,I)}{ \E_{d+1}(R,I)}\to \SK(R,I)$$ is injective. Moreover, since $d+1+2i\ge \dim(R)+2$, by Corollary \ref{transitive action wrt alternating form} we have $\Um_{2n+2i}(R,I) = e_1\big(\E_{2n+2i}(R,I) \cap \Sp_{\tau}(R,I)\big)$ for all $i\geq 1$, where $\tau = \chi_i\perp \varphi $. Then it only remains to show that  there exists $\alpha\in \SL_{d+1}(R,I)$ such that $v= e_1 \alpha$ and $[\alpha]\in \SK(R,I)$ lies in $\rm{im}(f)$, where $f: \KSp(R,I)\to \SK(R,I)$. In the following we show this.

Applying Proposition \ref{relative coordinate 
power}, one can find an $\varepsilon\in \E_{d+1}(R,I)$ such that $$v\varepsilon 
= (v_1,v_2,\dots, v_{d+1}^{2(d!)^2}).$$ Let us take $v'=(v_1,v_2,\dots, v_{d+1}^{2(d!)^2})$. Now, using Proposition \ref{Tariq relative} we get an $\alpha'\in \SL_{d+1}(R,I)$ such that $v'= e_1 \alpha'$, and $[\alpha']\in \SK(R,I)$ lies in the image of $f: \KSp(R,I)\to \SK(R,I)$. We define $\alpha:=\alpha'\epsilon^{-1}\in \SL_{d+1}(R,I)$. Then we have the following.$$e_1\alpha =e_1\alpha'\epsilon^{-1}=v'\epsilon^{-1}=v$$ Moreover, since 
$[\alpha]=[\alpha' \varepsilon^{-1}]=[\alpha']$ in $\mathrm{SK}_1(R,I)$, we get that $[\alpha]\in \SK(R,I)$ lies in $\rm{im}(f)$. This completes the proof for 1(a).

In the remaining cases, the proofs are identical. One only needs to use Propositions~\ref{relative coordinate power over real} and \ref{relative coordinate power over alg closed} in place of Proposition~\ref{relative coordinate power} in the above proof to obtain (1)(b) and (2), respectively. Hence, we omit them to avoid repetition.\qed

\section{Injective stability for relative $\mathrm{K_1Sp}$-groups}\label{laurent}

Let $R$ be a regular affine algebra of dimension $d$ over an infinite 
field $k$, and let $I$ be an ideal of $R$ such that $R/I$ is regular. 
In this section, we prove the main theorems of this article. We begin 
with the following theorem on injective stability for relative $\mathrm{K_1Sp}$-groups of regular affine algebras over various base fields.
    
    \bt\label{Relative Stability for affine 2}
	Let $R$ be a regular affine algebra over an infinite field $k$ of dimension $d\ge 2$. Let $I\subset R$ be an ideal such that $R/I$ is regular. Then we have the following.
    \begin{compactenum}
\item Suppose that $d$ is an even integer. 
  \begin{compactenum}
\item If $d\ge 6$ and for any prime $p\le d+1$ either $p\neq \rm{char}(k)$, and $\rm{c.d._p} (k)\leq 1$; or $p=\rm{char}(k)$ and $k$ is a perfect field, then $\mathrm{Sp}_{d}(R,I)\cap \ESp_{d+2}(R,I)= \ESp_{d}(R,I)$.

\item If $d\ge 6$, $k=\mathbb R$ and the set of all real points $X(\mathbb R)=\emptyset$, then $\mathrm{Sp}_{d+1}(R,I)\cap \ESp_{d+3}(R,I)= \ESp_{d+1}(R,I)$.
	  \end{compactenum}

\item     Suppose that $d$ is an odd integer.
		  \begin{compactenum}

	\item If $d\ge 5$ and $k$ is an infinite field such that $\rm{char}(k)\not=2$, then $\mathrm{Sp}_{d+1}(R,I)\cap \ESp_{d+3}(R,I)= \ESp_{d+1}(R,I)$.
    \item If $d\ge 7$, and $k$ is an algebraically closed field such that $\frac{1}{d!}\in k$, then $\mathrm{Sp}_{d-1}(R,I)\cap \ESp_{d+1}(R,I)= \ESp_{d-1}(R,I)$.
          \end{compactenum}

	\end{compactenum}
	\et
	
	\proof At each case it is enough to show that hypothesis (\ref{H1}) of Proposition \ref{cis} is satisfied. We only show this.

	First, we assume that $d\geq 6$ is an even integer. Then it follows 
from Theorem~\ref{symplectic completion} that hypothesis~(\ref{H1}) 
is satisfied for both conditions 1(a) and 1(b).
    
   Now we assume that $d$ is an odd integer. If $d\ge 5$ such that $\rm{char}(k)\not=2$, then hypothesis~(\ref{H1}) follows from Corollary \ref{transitive action for ESp-alternating}. Moreover, if $k$ is an algebraically closed field such that $\frac{1}{d!}\in k$ then it follows from Theorem \ref{symplectic completion}. This completes the proof.\qed


    In the remaining part of this section, we further improve the stability for the relative symplectic group $\mathrm{K_1Sp}(R[T,T^{-1}],I[T,T^{-1}])$. One may observe that when $k$ and $R$ are as described in Theorem \ref{Relative Stability for affine 2} 1(a) or (b), it is contained there. However, for an arbitrary infinite base field, Suslin’s cancellation result~\cite{Suslin1984} is unavailable. Hence, in this setting, one cannot follow the philosophy of Rao–van der Kallen~\cite{rvdk}. Instead, to tackle this problem we rely on the algebraic structure of the Laurent polynomial ring and certain patching techniques. Before proceeding, we prove the following lemma, which we believe must be well known, although we could not find a suitable reference. For the sake of completeness, we give the proof.
	
	\bl
		\label{symplectic patching}
		Let $R$ be a ring and $I\subset R$ be an ideal. Let $s,t\in R\setminus I$ be such that $\langle s\rangle R+\langle t\rangle R=R$. Let $\alpha_1\in \Sp_{2n}(R_s, I_s)$ and $\alpha_2\in \Sp_{2n}(R_t, I_t)$ be such that $(\alpha_1)_t=(\alpha_2)_s$. Then there exists a unique $\alpha\in \Sp_{2n}(R,I)$ such that $\alpha_s=\alpha_1$ and $\alpha_t=\alpha_2$.	 				
		
	\el
	
	\proof By the universal property of the fiber product, there exists an $\alpha \in \GL_{2n}(R,I)$ such that $\alpha_s=\alpha_1$ and $\alpha_t=\alpha_2$. We claim that $\alpha \in \Sp_{2n}(R,I)$. For this, it suffices to verify that $\alpha^T \chi_n \alpha = \chi_n$, and it is further enough to check this locally. Since for any prime ideal $\mathfrak{p}\in \Spec(R)$ either $s \notin \mathfrak{p}$ or $t \notin \mathfrak{p}$, we have $\alpha_{\mathfrak{p}}\in \Sp_{2n}(R_{\mathfrak{p}}, I_{\mathfrak{p}})$. Hence $(\alpha^T \chi_n \alpha)_{\mathfrak{p}} = (\chi_n)_{\mathfrak{p}}$ for all $\mathfrak{p}\in \Spec(R)$. This concludes the proof.\qed

	\bt
		\label{main theorem}
		Let $d\geq 5$ be an odd integer. Let $R$ be a regular affine algebra of dimension $d$ over an infinite field $k$. Let $I$ be an ideal of $R$ such that $R/I$ is regular. Let $A=R[T,T^{-1}]$ and $J=I[T,T^{-1}]$. Then $$\Sp_{d+1}(A,J)\cap \ESp_{d+3}(A,J)=\ESp_{d+1}(A,J).$$
	\et
	
	\proof
Let us choose $\alpha \in \Sp_{d+1}(A,J) \cap \ESp_{d+3}(A,J)$. To prove the theorem, it suffices to show that $\alpha \in \ESp_{d+1}(A,J)$. We will prove this only.
	 
	 We define $S:=k[T]\setminus\{0\}, B:=S^{-1}A$, and  $L:=S^{-1}J$. Then $B$ is a regular affine algebra of dimension $d$ over the field $k(T)$. Therefore, applying Theorem \ref{Relative Stability for affine 2} 2(a) we obtain that $\alpha_S \in \ESp_{d+1}(B,L)$. Clearing denominators one can find an $h\in k[T]\setminus\{0\}$ such that $\alpha_h\in \ESp_{d+1}(A_h,J_h)$. Since $T$ is a unit in $R[T,T^{-1}]$, without loss of generality, we may assume that $h(0)\neq 0$. Further, without loss of generality we may assume that $h$ is a monic polynomial in $T$.
	
	Since $h(0)\in k\setminus \{0\}$, we get that $\langle h\rangle k[T]+\langle T\rangle k[T]=k[T]$. Hence we have the following fiber product diagram.
	\[
	\begin{tikzcd}
		R[T]\arrow{d}\arrow{r}&R[T,T^{-1}]\arrow{d}\\
		R[T]_h\arrow{r} &R[T,T^{-1}]_h
	\end{tikzcd}
	\]
 As $\alpha_h\in \ESp_{d+1}(A_h,J_h)$, applying Corollary \ref{patching} or \cite[Lemma 3.7]{kopeiko} (when $I=R$) we can find $\varepsilon_1\in \ESp_{d+1}(A,J)$ and $\varepsilon_2\in \ESp_{d+1}(R[T]_h,I[T]_h)$ such that $(\alpha)_{h}=(\varepsilon_2)_T(\varepsilon_1)_h$. Let us define $$\alpha_1:=\alpha\varepsilon_1^{-1} \in \Sp_{d+1}(A,J)\text{ and }\alpha_2:= \varepsilon_2\in \ESp_{d+1}(R[T]_h, I[T]_h).$$ Then it follows from the construction  that $(\alpha_1)_h=(\alpha_2)_T$. Using Lemma \ref{symplectic patching}, we can find a unique element $\varepsilon\in \Sp_{d+1}(R[T], I[T])$  such that  $$(\varepsilon)_T=\alpha_1\text{ and }(\varepsilon)_h=\alpha_2.$$ However, we have $$(\varepsilon)_{Th}=(\alpha_2)_T\in \ESp_{d+1}(R[T]_{Th}, I[T]_{Th}),$$ where $Th$ is a monic polynomial in $R[T]$. Hence applying Corollary \ref{relative monic inversiion for symplectic matrix}, we obtain that $$\varepsilon\in \ESp_{d+1}(R[T], I[T]).$$Recall that, we have $$\alpha_1=\alpha\varepsilon_1^{-1} =(\varepsilon)_T\in \ESp_{d+1}(A,J)\text{ and }\varepsilon_1\in \ESp_{d+1}(A,J).$$ In particular, this proves that $\alpha\in \ESp_{d+1}(A,J)$. This concludes the proof.	\qed		

\section{Application: stably free modules of rank $2$ over smooth real $4$-folds}\label{application}
In this section, we give an application of the theory developed in the preceding sections. Recall that the group $\widetilde{V_{SL}}(R)$ is the cokernel of the hyperbolic map
$$
\SK(R) \longrightarrow \widetilde{V}(R),
$$
where $\widetilde{V}(R)$ is a generalized Vaserstein symbol as defined in \cite{Tariq2019} (see also \cite[\S 3A]{Tariq2021}) . The main theorem of this section is the following.

\bt\label{Tariq's theorem}
Suppose that $X=\Spec(A)$ is a smooth real variety of dimension $4$ such that $X(\mathbb R)=\emptyset$. Then stably free $A$-modules are free if and only if $\widetilde{V_{SL}}(R) = 0$.
\et

When the base field is algebraically closed, the above theorem was proved in \cite[Theorem 3.19]{Syed21}. To proceed, we need some preparation. The next result can be deduced from \cite{bfl} using the cohomological methods used in that paper (see Remark 4.12 there). However, the following argument allows us to bypass the more sophisticated approaches.

\bl\label{injective stability}
Suppose that $X=\Spec(A)$ is a smooth real variety of dimension $d\ge 3$ such that $X(\mathbb R)=\emptyset$. Then the canonical map $\frac{\SL_d(A)}{\E_d(A)} \to \SK(A)$ is injective.
\el

\proof In view of Proposition \ref{cis}, it is enough to show the following.

$$\Um_{d+1}(A[X],\ld X^2 - X\rd ) = e_1\SL_{d +1}( A[X],\ld X^2 - X\rd )$$
The proof is devoted to establish this only. We follow the proof given in \cite[Corollary 7.7]{FRS}. Let $v(X)\in \Um_{d+1}(A[X],\ld X^2 - X\rd )$. We define $R := A[X,T]/\ld T^2 - Tf \rd$, where $f=X^2-X$. Since $v(0)=v(1)=e_1$, there exists $w(X)\in R[X]^{d+1}$ such that $v(X)=e_1+fw(X)$. We define $u(T):= e_1+Tw(X)$, then note that $u(T)$ is in $\Um_{d+1}(R)$ with $u(0)=e_1$ and $u(f)=v(X)$. We observe that $A[X] \hookrightarrow R$ is an integral extension. Therefore, the algebra $R$ also does not have real maximal ideals. Furthermore, since $A$ is smooth, the ideal defining the singular locus of $R$ is precisely $\ld T,f \rd$. Since $u(0)=e_1$, it follows that $u(T)$ lies in the same elementary orbit of $e_1$ modulo $\ld T,f\rd$. Hence by \cite[Theorem 4.11]{bfl} (also see Remark 4.12 in the same article) one can find an $\alpha(T)\in \SL_{d+1}(R)$ such that $u(T)=e_1\alpha(T)$. Let $\beta(T):=\alpha(0)^{-1}\alpha(T)$. Then $\beta(f) \in \SL_{d+1}(A[X],\ld X^2-X\rd )$ such that $v(X)=e_1\beta(f)$. This concludes the proof.\qed

\bp\label{SymplecticOrbitSpace}
Suppose that $X=\Spec(A)$ is a smooth real variety of even dimension $d\ge 4$ such that $X(\mathbb R)=\emptyset$. Let $\gamma\in \GL_{d}(A)$ be a non-degenerate alternating form. Then the orbit space $\frac{\Um_d(A)}{\Sp_{\gamma}(A)}$ is trivial.
\ep
\proof First, we note that, without loss of generality, we may assume that $\operatorname{Pf}(\gamma)=1$. Let $v\in \Um_d(A)$. We show that all hypotheses of Proposition \ref{sufficient condition for alternating completion} are satisfied. We first note that hypothesis (1) is satisfied from Corollary \ref{injective stability}, and hypothesis (3) is satisfied as $$\Um_{d+2i}(A)=e_1\E_{d+2i}(A)=e_1\ESp_{\chi_i\perp \gamma}(A)$$ for all $i\ge 1$ (cf. Lemma \ref{relative version of Lemma 5.5 of SV}). Therefore, we only need to find an $\alpha\in \SL_d(A)$ such that (a) $v = e_1\alpha$ and (b) $\alpha \in \operatorname{im}(f)$, where $f :
\KSp(R) \to \SK(R)$ is the forgetful map. In the following we only show this.

It follows from the proof of \cite[Theorem 4.11]{bfl} that there exists a unimodular row $(a_1,\ldots, a_d)$ in $\Um_d(A)$ such that $v\equiv (a_1,\ldots, a_{d-1},a_d^{(d-1)!})\mod \E_d(A)$, and hence we get that $$v\equiv (a_1,\ldots, a_{d-1},a_d^{(d-1)!})\mod \ESp_d(A).$$
Let $v= (a_1,\ldots, a_{d-1},a_d^{(d-1)!})\alpha_1$ for some $\alpha_1\in \ESp_d(A)$. Then using Lemma \ref{completion of coordinate power lies in im(f)} we can find an $\alpha_2 \in \SL_d(A)$ such that $(a_1,\ldots, a_{d-1},a_d^{(d-1)!}) = e_1\alpha_2$, and $[\alpha_2] \in \SK(A)$ lies in the image of $f$. Let us take $\alpha=\alpha_2\alpha_1\in \SL_d(A)$. Then $v=e_1\alpha $, and $\alpha\in \operatorname{im}(f)$. This concludes the proof.\qed

\textbf{Proof of Theorem \ref{Tariq's theorem}.} First, we note that any stably free \(A\)-module of rank greater than \(2\) is already free by \cite[Theorem 4.13]{bfl}, while stably free \(A\)-modules of rank one are also free. Therefore, it remains only to consider stably free \(A\)-modules of rank \(2\). By \cite[Theorem 4.13]{bfl}, any stably free $A$-module of rank 2 is $1$-stably free, hence it is free if and only if the orbit space $\frac{\Um_3(A)}{\SL_3(A)}$ is trivial. Now the hypotheses of \cite[Theorem 3.2 and 3.6]{Syed21} are follows from  \cite[Theorem 2.8]{Banerjee2025} and Proposition \ref{SymplecticOrbitSpace}, respectively. Hence, applying the same, it follows that $\frac{\Um_3(A)}{\SL_3(A)}$ is trivial if and only if  $\widetilde{V_{SL}}(R) = 0$. This concludes the proof.\qed

\tiny
		\bibliographystyle{abbrvurl}
	\bibliography{Symplectic_stability}

	\end{document}